\newtheorem{thm}{Theorem}
\newtheorem{lem}{Lemma}[section]
\newtheorem{prop}[lem]{Proposition}
\newtheorem{coro}{Corollary}
\newtheorem{remar}{Remark}
\newcommand{\R}{\mathbb{R}}
\newcommand{\C}{\mathbb{C}}
\newcommand{\A}{\mathcal{A}}
\newcommand{\AC}{\mathcal{A}_{\mathbb{C}}}
\newcommand{\HAC}{\mathbb{H}^{\mathcal{A}_{\mathbb{C}}}}
\newcommand{\e}{\textit{1}}
\newcommand{\HO}{\mathbb{H}_0}
\newcommand{\HI}{\mathbb{H}_1}
\newcommand{\la}{\langle}
\newcommand{\ra}{\rangle}
\newcommand{\laa}{\langle\langle}
\newcommand{\raa}{\rangle\rangle}
\title{\bf A generalized Weierstrass representation of Lorentzian surfaces in $\R^{2,2}$ and applications}
\author{\sc Victor Patty\\ \\ Instituto de F\'isica y Matem\'aticas, U.M.S.N.H., 58000, Morelia, Michoac\'an, M\'exico \\ 
E-mail address: victorp@ifm.umich.mx} 
\date{}
\begin{document}
\maketitle
\hrule
\begin{abstract} 
We give a generalized Weierstrass formula for a Lorentz surface conformally immersed in the four-dimensional space $\R^{2,2}$ using spinors and Lorentz numbers. We also study the immersions of a Lorentzian surface in {\bf the} Anti-de Sitter space (a pseudo-sphere in $\R^{2,2}$): we give a new spinor representation formula and  deduce the conformal description of a flat Lorentzian surface in that space. 
\end{abstract}

{\bf 2010 Mathematics Subject Classification:} 53B25, 53C27, 53C42, 53C50. \\

{\bf Keywords:} Lorentz surfaces, Immersions, Spinors,  Weierstrass representation. \\
\hrule
\section{Introduction and results} 
Let $\R^{2,2}$ be the space $\R^4$ endowed with the metric of signature $(2,2)$ 
\begin{linenomath*}
$$\langle\cdot,\cdot\rangle:=-dx_0^2+dx_1^2-dx_2^2+dx_3^2.$$ 
\end{linenomath*}
A surface $M\subset\R^{2,2}$ is said to be Lorentzian if the metric $\la\cdot,\cdot\ra$ induces on $M$ a Lorentzian metric, i.e. a metric of signature $(1,1);$ if we consider the conformal class of the Lorentzian metric, we obtain a Lorentz surface, that is a surface which can be parameterized by open subsets $U\subset \A,$ where 
\begin{linenomath*} $$\A:=\{u+\sigma v\mid\  u,v\in\R,\ \sigma\notin\R,\ \sigma^2=1 \}$$ \end{linenomath*} is the real algebra of the Lorentz numbers (see details in Appendix \ref{lorentz appendix}). This parameterization is analogous to the parameterization of Riemann surfaces by complex numbers. In a conformal parameter $a:=u+\sigma v:U\subset\mathcal{A}\rightarrow M,$ we define $\widehat{a}:=u-\sigma v,$ 
\begin{linenomath*}\begin{equation*}\label{campos conformes} \partial_a:=\frac{1}{2}\left(\partial_u+\sigma \partial_v \right)\hspace{0.3in}\mbox{and}\hspace{0.3in} \partial_{\widehat{a}}:=\frac{1}{2}\left(\partial_u-\sigma \partial_v \right),\end{equation*}\end{linenomath*} with dual $1-$forms $da:=du+\sigma dv$ and $d\widehat{a}:=du-\sigma dv.$ We also define the real and imaginary part{\bf s} of $a=u+\sigma v\in\A$ by \begin{linenomath*} $$\Re e(a):=\frac{a+\widehat{a}}{2}=u \hspace{0.2in}\mbox{and}\hspace{0.2in} \Im m(a):=\sigma \frac{a-\widehat{a}}{2}=v,$$ \end{linenomath*} and we write $|a|^2:=a\widehat{a},$ for all $a\in \A.$

The first result of this paper is a generalized Weierstrass formula for a Lorentz surface conformally immersed into $\R^{2,2}.$ This result extends to the Lorentzian case the generalized Weierstrass formula for a Riemannian surface in four-dimensional spaces given by B.G. Konopelchenko in \cite{konopelchenko,konopelchenko2}. 

\begin{thm}\label{formula generalizada}
We consider $\phi_1,\phi_2,\psi_1,\psi_2:\A \longrightarrow \A$ and $p,q:\A \longrightarrow \R$ smooth functions such that 
\begin{linenomath*}
\begin{equation}\label{ecuacion dirac R22 1}
\begin{split}
\partial_a\phi_{\alpha} & =-p\psi_{\alpha} \\
\partial_{\widehat{a}}\psi_{\alpha} & =-q\phi_{\alpha},
\end{split}\hspace{0.3in}\alpha=1,2,
\end{equation} \end{linenomath*} and $|\psi_2\phi_1-\psi_1\phi_2|^2\neq 0.$ The following formulas 
\begin{linenomath*}
\begin{align}\label{formulas inmersion R22 1}
F_0+F_1 &=-\int_{\gamma}\left(\psi_1\widehat{\phi}_1da+\widehat{\psi}_1\phi_1d\widehat{a}\right), \notag \\
F_0-F_1 &=\int_{\gamma}\left(\psi_2\widehat{\phi}_2da+\widehat{\psi}_2\phi_2d\widehat{a}\right), \notag \\
F_2+\sigma F_3 &=\int_{\gamma}\left( \psi_1\widehat{\phi}_2da+ \widehat{\psi}_2\phi_1d\widehat{a} \right), \notag\\
F_2-\sigma F_3 &=\int_{\gamma}\left( \psi_2\widehat{\phi}_1da+\widehat{\psi}_1\phi_2d\widehat{a} \right),
\end{align} \end{linenomath*} 
where $\gamma$ is an arbitrary path between a fixed point and a variable point in $\A,$ define the conformal immersion of a Lorentz surface into $\R^{2,2}$ \begin{linenomath*} $$F=(F_0,F_1,F_2,F_3):\A \longrightarrow \R^{2,2}.$$ \end{linenomath*}  The induced metric of the Lorentz surface is of the form
\begin{linenomath*}
\begin{equation}\label{metrica R22}
g=-|\psi_2\phi_1-\psi_1\phi_2|^2dad\widehat{a},
\end{equation}\end{linenomath*} and the Lorentzian norm of the mean curvature vector of the immersion is
\begin{linenomath*}\begin{equation}\label{curvatura media R22}
|\vec{H}|^2=\frac{pq}{|\psi_2\phi_1-\psi_1\phi_2|^2}.
\end{equation} \end{linenomath*}

Conversely, the isometric immersion of a simply-connected Lorentzian surface $(M,g)$ into $\R^{2,2}$ may be described in that way.
\end{thm}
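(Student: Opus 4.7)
The plan is to prove the forward direction in four small steps and then handle the converse via the spinorial representation of Lorentzian surface immersions in $\R^{2,2}$. For the forward part I would verify: (i) that the four $\A$-valued $1$-forms inside the integrals in (\ref{formulas inmersion R22 1}) are closed, so each $F_j$ is well defined on the simply connected parameter domain; (ii) that the resulting $F_j$ are real; (iii) that the induced metric has the conformal form (\ref{metrica R22}); and (iv) that $|\vec{H}|^2$ is given by (\ref{curvatura media R22}).

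Closedness of each $1$-form $\omega=A\,da+B\,d\widehat{a}$ reduces to the identity $\partial_{\widehat{a}}A=\partial_a B$. For the integrand of $F_0+F_1$, the Dirac system (\ref{ecuacion dirac R22 1}) together with its Lorentz conjugate (available because $p$ and $q$ are real) gives
\begin{linenomath*}
\begin{equation*}
\partial_{\widehat{a}}\bigl(\psi_1\widehat{\phi}_1\bigr)=-q|\phi_1|^2-p|\psi_1|^2=\partial_a\bigl(\widehat{\psi}_1\phi_1\bigr),
\end{equation*}
\end{linenomath*}
and the same cross identity is obtained for the remaining three integrands after permuting the indices $1,2$. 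The integrands of $F_0\pm F_1$ are invariant under $a\mapsto\widehat{a}$, so these two components are real, while the integrands of $F_2\pm\sigma F_3$ are Lorentz conjugates of each other, which forces $F_2$ and $F_3$ to be real as well.

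Next, I would read $\partial_a F_j$ and $\partial_{\widehat{a}}F_j$ off (\ref{formulas inmersion R22 1}) and expand the pseudo-Euclidean form through $\la V,V\ra=-(V_0+V_1)(V_0-V_1)-(V_2+\sigma V_3)(V_2-\sigma V_3)$, which is valid for any $\A$-valued vector $V$ since $\sigma^2=1$. For $V=\partial_a F$ both binomial products reduce to $\pm\psi_1\psi_2\widehat{\phi}_1\widehat{\phi}_2$ with opposite signs and cancel, so $\la\partial_a F,\partial_a F\ra=0$; symmetrically $\la\partial_{\widehat{a}}F,\partial_{\widehat{a}}F\ra=0$. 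The corresponding polarization applied to $\la\partial_a F,\partial_{\widehat{a}}F\ra$ collapses, after expanding $|\psi_2\phi_1-\psi_1\phi_2|^2=|\psi_2|^2|\phi_1|^2+|\psi_1|^2|\phi_2|^2-\psi_2\phi_1\widehat{\psi}_1\widehat{\phi}_2-\widehat{\psi}_2\widehat{\phi}_1\psi_1\phi_2$, to $-\tfrac{1}{2}|\psi_2\phi_1-\psi_1\phi_2|^2$, giving (\ref{metrica R22}). Differentiating the two null relations shows that $\partial_a\partial_{\widehat{a}}F$ is orthogonal to both null tangent directions, hence is purely normal, so $\vec{H}$ is its suitable rescaling by the conformal factor; using (\ref{ecuacion dirac R22 1}) to evaluate $(V_0+V_1)(V_0-V_1)$ and the conjugate pair $(V_2+\sigma V_3)(V_2-\sigma V_3)$ for $V=\partial_a\partial_{\widehat{a}}F$, the $p^2$ and $q^2$ contributions cancel and one is left with a $pq$ multiple of $|\psi_2\phi_1-\psi_1\phi_2|^2$, yielding (\ref{curvatura media R22}).

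For the converse, starting from an isometric immersion of a simply connected Lorentzian surface $(M,g)$ into $\R^{2,2}$, I would fix a conformal parameter $a$ and trivialize the spin bundles of the tangent and normal bundles of $M$. Null decompositions of a tangent frame in $\R^{2,2}$ encode $F$ through two pairs $(\phi_\alpha,\psi_\alpha)$, $\alpha=1,2$, of $\A$-valued spinor fields whose defining relations are precisely the differentials of (\ref{formulas inmersion R22 1}); the Gauss and Codazzi compatibility then rearranges into the coupled first-order system (\ref{ecuacion dirac R22 1}), with $p$ and $q$ dictated by the mean curvature via (\ref{curvatura media R22}). The main obstacle, in my view, is showing that $p$ and $q$ come out genuinely \emph{real} rather than merely $\A$-valued: this reality is exactly what made closedness work in step two, and geometrically it must be enforced by matching the Lorentzian conjugation of the spin frames with the conjugation symmetry between the $\phi_\alpha$ and $\psi_\alpha$ pairs, which is the precise point where the Lorentzian formalism diverges from its complex Riemannian counterpart.
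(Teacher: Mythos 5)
Your treatment of the direct statement is correct and in fact supplies the details the paper omits (it simply declares the direct statement ``obtained easily''). The closedness identity $\partial_{\widehat{a}}(\psi_\alpha\widehat{\phi}_\alpha)=\partial_a(\widehat{\psi}_\alpha\phi_\alpha)$ is exactly the remark the paper makes right after the theorem, your extension of it to the cross terms is right, the reality argument via the hat involution is right, and the factorization $\la V,V\ra=-(V_0+V_1)(V_0-V_1)-(V_2+\sigma V_3)(V_2-\sigma V_3)$ does give $\la\partial_aF,\partial_aF\ra=\la\partial_{\widehat{a}}F,\partial_{\widehat{a}}F\ra=0$ and $\la\partial_aF,\partial_{\widehat{a}}F\ra=-\tfrac12|\psi_2\phi_1-\psi_1\phi_2|^2$, hence \eqref{metrica R22}. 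The one place you must be careful is step (iv): the computation gives $|\partial_a\partial_{\widehat{a}}F|^2=pq\,|\psi_2\phi_1-\psi_1\phi_2|^2$ on the nose, and since $\partial_a\partial_{\widehat{a}}F=\tfrac14(F_{uu}-F_{vv})$ while $\la\partial_aF,\partial_{\widehat{a}}F\ra=-\tfrac{\lambda^2}{2}$, the normalization constant relating $\partial_a\partial_{\widehat{a}}F$ to $\vec H$ must be tracked explicitly to match \eqref{curvatura media R22}; written as loosely as ``a $pq$ multiple'', this step does not yet establish the precise coefficient.

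For the converse you propose the same route as the paper (the spinor representation theorem), but your plan stops exactly where the paper does the actual work. The paper takes $\varphi$ to be the restriction to $M$ of the constant spinor $\pm\sigma\e$, fixes a conformal chart with $g=\lambda^2(-du^2+dv^2)$, and --- this is the key device --- rotates the normal frame $(e_0,e_1)$ by a Lorentzian angle $\beta$ solving the integrable system \eqref{rotacion normal}. That gauge choice simultaneously absorbs the derivatives of $\lambda$ and the normal connection form $\la\nabla e_0,e_1\ra$, so that the Dirac equation written in the resulting adapted spinorial frame becomes \emph{literally} the system \eqref{ecuacion dirac R22 1}, with explicit components $\phi_1=\lambda^{-1/2}(-\varphi_3+\sigma\varphi_2)$, $\psi_1=\lambda^{3/2}(\varphi_0+\sigma\varphi_1)$, etc., and with $p=\lambda^{-1}(h_1+h_0)$, $q=\lambda^3(h_1-h_0)$, which are manifestly real because $\lambda,h_0,h_1$ are. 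This frame normalization is precisely the missing mechanism behind the reality of $p$ and $q$ that you correctly single out as the main obstacle; invoking ``Gauss and Codazzi compatibility'' will not by itself produce a system of the normalized form \eqref{ecuacion dirac R22 1} --- without the rotation $\beta$ extra connection terms survive. The formulas \eqref{formulas inmersion R22 1} then drop out of computing $\xi(X)=\laa X\cdot\varphi,\varphi\raa$ in that frame, and \eqref{metrica R22}--\eqref{curvatura media R22} follow from $H(\varphi,\varphi)=1$. So: forward direction essentially complete (modulo the constant), converse a correct strategy with the decisive gauge-fixing step still to be supplied.
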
 

We note that Equations \eqref{ecuacion dirac R22 1} imply that 
\begin{linenomath*}\begin{equation*}
\partial_a(\widehat{\psi}_\alpha\phi_{\alpha})=\partial_{\widehat{a}}(\psi_{\alpha}\widehat{\phi}_{\alpha}),\hspace{0.3in}\alpha=1,2,
\end{equation*}\end{linenomath*} and the formulas in \eqref{formulas inmersion R22 1} do not depend on the path $\gamma:$ the coordinates $F_k:\A\to\R^{2,2}$ $(k=0,1,2,3)$ are uniquely defined up to constants. This result essentially relies on a spinor representation theorem of a Lorentzian surface in $\R^{2,2}:$ an isometric immersion of a Lorentzian surface in $\R^{2,2}$ with mean curvature vector $\vec{H}\in E$ ($E$ is the normal bundle on $M$) is equivalent to a normalized spinor field $\varphi\in\Gamma(\Sigma E\otimes \Sigma M)$ solution of the Dirac equation $D\varphi=\vec{H}\cdot\varphi$ (see Section \ref{trabajo previo}). In this context, the maps $\phi_1,\phi_2,\psi_1$ and $\psi_2$ appear to be the components of $\varphi$ in a convenient spinorial frame, and the Dirac equation is equivalent to \eqref{ecuacion dirac R22 1}. 

As a consequence of Theorem \ref{formula generalizada} we obtain a Weierstrass representation of a minimal Lorentzian surface in $\R^{2,2}$ which extends the classical Weierstrass representation of a minimal Lorentz surface in $\R^{2,1}$ given by J. Konderak in \cite{konderak}. A representation of a minimal Lorentzian surface was also given by M.P. Dussan and M. Magid in \cite{dussan}. Here we make explicit the dependence of the representation on the components of a spinor field, and the fact that this representation is a special case of a general spinor representation formula (Theorem \ref{formula generalizada}).
 
\begin{coro}\label{minimas generalizada}  Let $\psi_1,\psi_2,\widehat{\phi}_1,\widehat{\phi}_2:\A\to \A$ be conformal maps\footnote{See Appendix \ref{lorentz appendix} for the definition of conformal maps and $1-$forms on Lorentz surfaces.}. The formulas \begin{linenomath*}
\begin{align}\label{inmersion minima R22}
F_0 &=\Re e \left[\int_{\Gamma}\left( -\psi_1\widehat{\phi}_1+\psi_2\widehat{\phi}_2 \right)da \right] \notag \\
F_1 &=\Re e \left[\int_{\Gamma}\left( -\psi_1\widehat{\phi}_1-\psi_2\widehat{\phi}_2 \right)da \right] \notag \\
F_2 &=\Re e\left[\int_{\Gamma}\left(\psi_2\widehat{\phi}_1+\psi_1\widehat{\phi}_2 \right)da \right] \notag \\
F_3 &=\Im m\left[\int_{\Gamma}\left( -\psi_2\widehat{\phi}_1+\psi_1\widehat{\phi}_2 \right)da \right],
\end{align} \end{linenomath*}
define a conformal minimal immersion of a Lorentz surface into $\R^{2,2}.$ 

Conversely, the isometric immersion of a minimal simply-connected Lorentzian surface $(M,g)$ into $\R^{2,2}$ may be described in that way.
\end{coro}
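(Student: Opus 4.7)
The plan is to deduce this corollary as the specialization of Theorem \ref{formula generalizada} to the minimal case, where the role of the parameters $p$ and $q$ is played by zero. First I would observe that the conformality of $\psi_1,\psi_2,\widehat{\phi}_1,\widehat{\phi}_2$ translates exactly into the two Dirac-type equations \eqref{ecuacion dirac R22 1} with $p=q\equiv 0$: indeed, saying that $\psi_\alpha$ is conformal on $\A$ is the statement $\partial_{\widehat{a}}\psi_\alpha=0$, while $\widehat{\phi}_\alpha$ being conformal is $\partial_{\widehat{a}}\widehat{\phi}_\alpha=0$, which by Lorentz conjugation is equivalent to $\partial_{a}\phi_\alpha=0$. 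So the corollary's hypothesis puts us in the hypotheses of Theorem \ref{formula generalizada} with $p=q=0$, and the non-degeneracy condition $|\psi_{2}\phi_{1}-\psi_{1}\phi_{2}|^2\neq 0$ is the conformality of the induced immersion (so we keep it as an implicit generic hypothesis).

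Next I would show that the four line integrals in \eqref{formulas inmersion R22 1} collapse to the formulas \eqref{inmersion minima R22} by noticing that within each integral the two summands are $\A$-conjugate $1$-forms. Concretely, $\overline{\psi_\alpha\widehat{\phi}_\beta\,da}=\widehat{\psi}_\alpha \phi_\beta\,d\widehat{a}$, so that $F_0+F_1$ and $F_0-F_1$ each equal twice the real part of an integral, giving the first two formulas of the corollary after the trivial linear combinations. For $F_2$ and $F_3$ the trick is slightly different: the two integrands for $F_2+\sigma F_3$ are \emph{not} mutual conjugates (the indices are swapped), but adding and subtracting the formulas for $F_2\pm \sigma F_3$ does pair $\psi_1\widehat{\phi}_2+\psi_2\widehat{\phi}_1$ with its conjugate (yielding $2\,\Re e$) and $\psi_1\widehat{\phi}_2-\psi_2\widehat{\phi}_1$ with the negative of its conjugate (yielding $2\sigma\,\Im m$, using $a-\widehat{a}=2\sigma\,\Im m(a)$). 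This produces the formulas for $F_2$ and $F_3$. Minimality $\vec{H}=0$ follows from \eqref{curvatura media R22} together with the fact that the Dirac equation associated to the spinor $\varphi$ here reduces to $D\varphi=0$ (as observed in the paragraph following Theorem \ref{formula generalizada}), so $p=q=0$ forces not merely $|\vec H|^2=0$ but $\vec H=0$.

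For the converse, I would start from a minimal isometric immersion of a simply-connected $(M,g)$ into $\R^{2,2}$ and invoke the spinor representation mentioned in the paper: $\vec H=0$ is equivalent to a normalized spinor field $\varphi\in\Gamma(\Sigma E\otimes\Sigma M)$ satisfying $D\varphi=0$. Expressing $\varphi$ in the same spinorial frame used in Theorem \ref{formula generalizada} gives components $\phi_1,\phi_2,\psi_1,\psi_2$, and the harmonic Dirac equation $D\varphi=0$ decomposes precisely as $\partial_a\phi_\alpha=0$ and $\partial_{\widehat{a}}\psi_\alpha=0$, i.e.\ as the conformality of $\psi_\alpha$ and $\widehat{\phi}_\alpha$. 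Then the direct direction of Theorem \ref{formula generalizada} (with $p=q=0$) reproduces the immersion up to a constant via the formulas \eqref{inmersion minima R22}.

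The part requiring the most care is the translation between the Dirac equation $D\varphi=\vec H\cdot\varphi$ and the scalar system \eqref{ecuacion dirac R22 1}: in the Lorentzian setting $|\vec H|^2=0$ does not by itself imply $\vec H=0$ (null vectors exist), so the converse cannot be argued purely from the mean curvature formula \eqref{curvatura media R22}. One must instead go through the spinorial structure developed in Section \ref{trabajo previo} to see that minimality forces both scalar functions $p$ and $q$ to vanish. Everything else amounts to bookkeeping with the $\A$-valued conjugation rules $\overline{da}=d\widehat a$ and $\Re e,\Im m$ defined in the introduction.
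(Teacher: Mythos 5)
Your proposal is correct and follows the route the paper intends: specialize Theorem \ref{formula generalizada} to $p=q\equiv 0$, identify the resulting system \eqref{ecuacion dirac R22 1} with the conformality of $\psi_\alpha$ and $\widehat{\phi}_\alpha$, and collapse the four integrals of \eqref{formulas inmersion R22 1} into real and imaginary parts via $\widehat{\psi_\alpha\widehat{\phi}_\beta\,da}=\widehat{\psi}_\alpha\phi_\beta\,d\widehat{a}$. Your remark that $|\vec H|^2=0$ alone would not give minimality, and that one must instead read $\vec H=0$ off the relations $p=\lambda^{-1}(h_1+h_0)$, $q=\lambda^3(h_1-h_0)$ from the proof of Theorem \ref{formula generalizada}, correctly handles the only genuinely delicate point.
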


We also study the isometric immersions of a Lorentzian surface in the pseudo-spheres of $\R^{2,2}:$ we consider the three-dimensional Anti-de Sitter space 
\begin{linenomath*}\begin{equation}\label{anti de sitter} \mathbb{H}^{2,1}:=\{x\in\R^{2,2}\mid \la x,x\ra=-1 \}\end{equation}\end{linenomath*} (also called the three-dimensional pseudo-hyperbolic space), of constant negative curvature $-1,$ and the three-dimensional pseudo-sphere with index $2$ \begin{linenomath*}\begin{equation}\label{pseudo esfera} \mathbb{S}^{1,2}:=\{x\in\R^{2,2}\mid \la x,x\ra=1 \}\end{equation}\end{linenomath*} of constant positive curvature $1.$ We obtain a new spinor representation of a Lorentzian surface in $\mathbb{H}^{2,1}$ and in $\mathbb{S}^{1,2}:$ an isometric immersion of a Lorentzian surface in $\mathbb{H}^{2,1}$ or in $\mathbb{S}^{1,2}$ with mean curvature $H$ is equivalent to a normalized spinor field $\psi\in\Gamma(\Sigma M)$ solution of the Dirac equation \begin{linenomath*} $$D_M\psi=H\psi+\overline{\psi} \hspace{0.2in}\mbox{or}\hspace{0.2in} D_M\psi=-i\ H\psi+i\ \overline{\psi}$$ \end{linenomath*} respectively. These spinor representations are different to the characterizations obtained by M.A. Lawn and J. Roth in \cite{lawn_roth}, where two spinor fields are needed. As a consequence of this spinor representation we obtain a correspondence between a minimal Lorentzian surface in the pseudo-spheres of $\R^{2,2}$ and a Lorentzian surface with constant mean curvature one in the three-dimensional Minkowski spaces of $\R^{2,2}.$ This transformation is a generalization of a classical transformation for surfaces in $S^3,$ described by H.B. Lawson in \cite{lawson2}.

We then deduce the structure of a flat Lorentzian surface in the pseudo spheres of $\R^{2,2}.$ If $M_2(\A)$ stands for the set of the $2\times 2$ matrices with entries belonging to $\A,$ we can define the usual determinant on $M_2(\A)$ and set \begin{linenomath*} $$Sl_2(\A):=\{B\in M_2(\A)\mid \det B=1 \}.$$ \end{linenomath*}
We define the set of the $2\times 2$ {\it Hermitian matrices with coefficients in $\A$} by
\begin{linenomath*}\begin{equation*}\label{hermitianas}
Herm_2(\A):= \lbrace B\in M_2(\A)\mid B=B^*\rbrace,
\end{equation*} \end{linenomath*}
where $B^*$ is the conjugate transpose of $B$ (see Section \ref{flat pseudo esferas} for details). There exists an identification \begin{linenomath*} $$\R^{2,2}\simeq (Herm_2(\A),-\det),$$ \end{linenomath*} such that the pseudo-spheres are described by \begin{linenomath*} $$\mathbb{H}^{2,1}\simeq\left\lbrace BB^*\mid B\in Sl_2(\A)\right\rbrace \hspace{0.2in}\mbox{and}\hspace{0.2in} \mathbb{S}^{1,2}\simeq\left\lbrace B\begin{pmatrix}
-1 & 0 \\0 & 1 \end{pmatrix}B^*\mid B\in Sl_2(\A)\right\rbrace.$$ \end{linenomath*}
The result is the following: 
\begin{thm}\label{superficies en H12}Let $M$ be an oriented simply-connected Lorentz surface, $B:M \longrightarrow Sl_2(\A)$ be a conformal immersion such that there exist $\theta,\omega$ conformal $1-$forms that satisfy \begin{linenomath*}\begin{equation}\label{inmersion lagrangiana}
B^{-1}dB=\begin{pmatrix}0 & \theta \\ \omega & 0 \end{pmatrix}.
\end{equation}\end{linenomath*} Assume that $|\theta|^2\neq |\omega|^2$ (resp. $|\theta|^2\neq-|\omega|^2$). Then 
\begin{linenomath*} $$F=BB^*:M \longrightarrow \mathbb{H}^{2,1} \hspace{0.3in}
\left(\mbox{resp.}\ F=B\begin{pmatrix}-1 & 0\\0 & 1\end{pmatrix}B^*:M \longrightarrow \mathbb{S}^{1,2}\right)$$ \end{linenomath*} defines with the induced metric, a flat isometric immersion.

Conversely, an isometric immersion of a simply-connected flat Lorentzian surface $(M,g)$ in $\mathbb{H}^{2,1}$ (resp. in $\mathbb{S}^{1,2}$) may be described as above. 
\end{thm}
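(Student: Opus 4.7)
\emph{Strategy.} The plan is to transport the whole problem into the Lie group $Sl_2(\A)$ via the isomorphism $\R^{2,2}\simeq (Herm_2(\A),-\det)$, under which $Sl_2(\A)$ acts isometrically on $Herm_2(\A)$ by $B\cdot H:=BHB^*$, and the pseudo-spheres $\mathbb{H}^{2,1}$, $\mathbb{S}^{1,2}$ appear as the orbits of the base points $I$ and $J:=\mathrm{diag}(-1,1)$. The induced geometry of $F$ is then encoded in the Maurer--Cartan form $\omega_0:=B^{-1}dB$, and flatness will correspond to $\omega_0$ taking values in the off-diagonal subspace prescribed in the statement.

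\emph{Direct direction.} Hermiticity and $\det B\cdot\overline{\det B}=1$ immediately place $F=BB^*$ in $\mathbb{H}^{2,1}$ and $F=BJB^*$ in $\mathbb{S}^{1,2}$. Differentiating gives
\begin{equation*}
 dF=B(\omega_0+\omega_0^*)B^*\qquad\bigl(\text{resp. }dF=B(\omega_0 J+J\omega_0^*)B^*\bigr),
\end{equation*}
and since $\det B=\det B^*=1$ the induced metric reads
\begin{equation*}
 g(X,X)=-\det dF(X)=\bigl|\theta(X)+\overline{\omega(X)}\bigr|^{2}\qquad\bigl(\text{resp. }\bigl|\theta(X)-\overline{\omega(X)}\bigr|^{2}\bigr).
\end{equation*}
The Maurer--Cartan identity $d\omega_0+\omega_0\wedge\omega_0=0$ splits on the diagonal and off-diagonal into $d\theta=0$, $d\omega=0$ and $\theta\wedge\omega=0$; in particular $\theta\pm\bar\omega$ is closed. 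Simple-connectedness then yields $\theta\pm\bar\omega=dh$ for some $h=h_1+\sigma h_2:M\to\A$, and
\begin{equation*}
 g=|dh|^{2}=(dh_1)^{2}-(dh_2)^{2}
\end{equation*}
is manifestly flat Minkowski in the coordinates $(h_1,h_2)$. A direct Jacobian computation identifies non-degeneracy of $g$ with $|\theta|^{2}\neq|\omega|^{2}$ in the $\mathbb{H}^{2,1}$ case and $|\theta|^{2}\neq-|\omega|^{2}$ in the $\mathbb{S}^{1,2}$ case, the sign flip arising from the $J$ in the involution $B\mapsto BJB^*$.

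\emph{Converse.} Let $F:(M,g)\to\mathbb{H}^{2,1}$ be isometric with $M$ simply-connected and flat. I would invoke the spinor representation of immersions in $\mathbb{H}^{2,1}$ established earlier in the paper: $F$ is equivalent to a normalised $\psi\in\Gamma(\Sigma M)$ solving $D_M\psi=H\psi+\bar\psi$. Using simple-connectedness and the identification $\mathbb{H}^{2,1}\simeq Sl_2(\A)/U$ for $U=\{B:BB^*=I\}$, lift $F$ globally to $B:M\to Sl_2(\A)$ satisfying $F=BB^*$; this lift is unique up to a $U$-valued right-gauge. The Gauss equation $K=-1+\det S_\nu$ with the hypothesis $K=0$, combined with the Codazzi equations, forces the diagonal part of $B^{-1}dB$ to be gauge-trivial, and a suitable choice of $U$-factor reduces the spin connection to the required off-diagonal form $\begin{pmatrix}0 & \theta\\ \omega & 0\end{pmatrix}$ with $\theta,\omega$ conformal. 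The $\mathbb{S}^{1,2}$ case runs in parallel with $F=BJB^*$ and the twisted Dirac equation $D_M\psi=-i\,H\psi+i\,\bar\psi$.

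\emph{Main obstacle.} The delicate step is the gauge reduction in the converse: producing, from a flat immersion, a lift whose Maurer--Cartan form has vanishing diagonal and conformal off-diagonal entries. This is the Lorentzian analogue of the Galvez--Martinez--Milan description of flat surfaces in $\mathbb{H}^3$ via $SL(2,\C)$; the new technicality is the careful matching of the Lorentz-number conjugation on $\A$ with the (possibly $J$-twisted) Hermitian involution on $Sl_2(\A)$, which is precisely what produces the two distinct non-degeneracy conditions $|\theta|^{2}\neq\pm|\omega|^{2}$ in the two pseudo-spheres.
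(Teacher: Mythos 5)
Your \emph{direct} statement is essentially correct and takes a mildly different route from the paper: you observe that $\theta\pm\overline\omega$ is closed (a conformal $1$-form and its conjugate are automatically closed), integrate it on the simply-connected $M$ to a map $h:M\to\A$, and exhibit $g=|dh|^2$ as the pullback of the flat metric of $\A\simeq\R^{1,1}$ under the developing map $h$, so that nondegeneracy of $g$ immediately gives flatness. The paper instead computes the shape operator $S=B\left(\begin{smallmatrix}0&\theta-\overline\omega\\ -\omega+\overline\theta&0\end{smallmatrix}\right)B^*$ together with $g=(\theta+\overline\omega)(\omega+\overline\theta)$ and concludes by the Gauss equation; your version avoids the second fundamental form altogether. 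This part is fine (modulo the fact that your ``direct Jacobian computation'' of the two nondegeneracy conditions is asserted rather than performed).

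The \emph{converse}, however, is only a plan, and the step you yourself label the ``main obstacle'' is precisely the content of the theorem. You propose to take an arbitrary lift $B$ with $F=BB^*$ and claim that Gauss--Codazzi ``forces'' the diagonal of $B^{-1}dB$ to be gauge-trivial and a suitable right $U$-gauge to make the off-diagonal entries conformal; no construction and no mechanism is given, and this is not how the structure arises. The paper does not gauge-fix an abstract lift: it produces $B$ explicitly from the spinor representation. Flatness of $TM$ and of the normal bundle $E$ (the latter automatic for a surface in a pseudo-sphere of $\R^{2,2}$) makes the connection on $\tilde{Q}$ flat, so $\tilde{Q}$ admits a global parallel section $\tilde{s}$; the restricted constant spinor $\varphi$ satisfies $\nabla_X\varphi=\eta(X)\cdot\varphi$, which in $\tilde{s}$ reads $d[\varphi]=[\eta][\varphi]$ with $[\eta]$ valued in $\A J\oplus i\A K\subset\HO$, and under the algebra isomorphism $A_0:\HO\to M_2(\A)$ this is exactly the statement that $B:=A_0(\overline{[\varphi]})$ has off-diagonal Maurer--Cartan form; the identity $A_1(\sigma i\e\,p\,p')=-A_0(p)A_0(p')$ applied to $F=\laa e_0\cdot\varphi,\varphi\raa$ then yields $F=BB^*$ (resp.\ the $J$-twisted formula). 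Crucially, the conformality of $\theta$ and $\omega$ does \emph{not} come from Gauss--Codazzi: it comes from the regularity of the Gauss map, which induces the Lorentz structure on $M$ with respect to which $[\varphi]:M\to Spin(2,2)$, hence $B$, is a conformal map, after which the remark in the appendix (the differential of a conformal map is a conformal $1$-form) finishes the argument. Since your sketch contains neither the parallel spinorial frame, nor the isomorphism $A_0$, nor the Gauss-map-induced Lorentz structure, the converse direction remains unproven as written.
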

This conformal description extends to the Lorentzian case the main result of \cite{gmm1} concerning surfaces in the three-dimensional hyperbolic space. As a consequence of this theorem we show that a flat Lorentzian surface in Anti-de Sitter space is locally the product of two curves in $Sl_2(\R)$ (see Section \ref{product of curves}).

We quote the following related papers. A direct extension of the Weierstrass representation to generic nonminimal surfaces in $\R^3$ have been given by K. Kenmotsu in \cite{kenmotsu}; a different (but equivalent) extension was proposed by B.G. Konopelchenko using two complex functions and one real function satisfying a linear system similar to \eqref{ecuacion dirac R22 1} (see \cite[Section 1]{konopelchenko2}, and the references therein). Following this approach, a generalized formula for surfaces in $\R^{4-r,r}$ ($r=0,1,2$) was described by B.G. Konopelchenko in \cite{konopelchenko2} (see also \cite{konopelchenko}). Finally, a conformal description of a surface in the three-dimensional hyperbolic space was given by J.A. G\'alvez, A. Mart\'inez y F. Mil\'an in \cite{gmm1,gmm2}, and by P. Bayard using spinors in dimension $4$ in \cite[Theorem 5]{bayard1}. 

The outline of the paper is as follows: in Section \ref{prelim} we describe the Clifford algebra of $\R^{2,2}$ and its spinor representation using quaternions and Lorentz numbers, in Section \ref{trabajo previo} we recall the spinor representation of a Lorentzian surface in $\R^{2,2}$ (\cite{BP}). In Section \ref{weierstrass representation} we prove the generalized Weierstrass formula (Theorem \ref{formula generalizada}); we also deduce a 
generalized formula for a Lorentzian surface in the three-dimensional Minkowski space $\R^{2,1},$ analogous to the case of surfaces in $\R^3.$ In Section \ref{spinor pseudo esfera} we deduce the spinor representation of a Lorentzian surface in the pseudo-spheres of $\R^{2,2},$ and finally we obtain a conformal description of a flat Lorentzian surface in the pseudo-spheres (Theorem \ref{superficies en H12}) in Section \ref{flat pseudo esferas}. An appendix on Lorentz structures ends the paper.

\section{Clifford algebra of $\R^{2,2},$ spinorial group and their representation}\label{prelim}
We recall here the main results concerning the Clifford algebras and spinors of $\R^{2,2}$ using Lorentz numbers and quaternions. Details may be found in \cite{BP}. 

We consider the complexified Lorentz numbers
\begin{linenomath*} $$\AC:=\mathcal{A}\otimes\mathbb{C}\simeq\{u+\sigma v:\  u,v\in\C\},$$ \end{linenomath*}
and the quaternions  with coefficients in $\AC$ 
\begin{linenomath*} $$\HAC:=\{ \zeta_0\e+\zeta_1I+\zeta_2J+\zeta_3K:\ \zeta_0,\zeta_1,\zeta_2,\zeta_3 \in \AC\},$$ \end{linenomath*}
where $I,J$ and $K$ are such that 
\begin{linenomath*} $$I^2=J^2=K^2=-\e,\hspace{.5cm}IJ=-JI=K.$$ \end{linenomath*}
If $\zeta=\zeta_0\e+\zeta_1I+\zeta_2J+\zeta_3K$ belongs to $\HAC,$ we define its conjugate by 
\begin{linenomath*} $$\overline{\zeta}:=\zeta_0\e-\zeta_1I-\zeta_2J-\zeta_3K,$$ \end{linenomath*}
and writing $\widehat{a}:=u-\sigma v$ for $a=u+\sigma v\in \AC,$ we set
\begin{linenomath*} $$\widehat{\zeta}:=\widehat{\zeta_0}\e+\widehat{\zeta_1}I+\widehat{\zeta_2}J+\widehat{\zeta_3}K.$$ \end{linenomath*}

\subsection{Clifford map and spin representation}\label{prelim clifford}
If $\HAC(2)$ stands for the set of the $2\times2$ matrices with entries belonging to $\HAC,$ the map \begin{linenomath*}
\begin{eqnarray*}
\gamma:\hspace{.5cm}\R^{2,2} & \longrightarrow & \HAC(2) \\
(x_0,x_1,x_2,x_3) & \longmapsto & 
\begin{pmatrix} 0 & \sigma i x_0\e+x_1I+ix_2J+x_3K\\ -\sigma i x_0\e+x_1I+ix_2J+x_3K & 0 \end{pmatrix}\nonumber
\end{eqnarray*} \end{linenomath*}
is a Clifford map, that is satisfies \begin{linenomath*}
$$\gamma(x)^2=-\langle x,x\rangle\left(\begin{array}{cc}\e&0\\0&\e\end{array}\right)$$ \end{linenomath*}
for all $x\in\R^{2,2},$ and thus identifies \begin{linenomath*}
\begin{equation}\label{description Cl22} 
Cl(2,2)\simeq\left\lbrace\begin{pmatrix} p & q\\ \widehat{q} & \widehat{p} \end{pmatrix}:\ p\in \mathbb{H}_0,\ q\in \mathbb{H}_1 \right\rbrace, 
\end{equation} 
where 
$$\mathbb{H}_0:=\left\lbrace p_0\e+ip_1I+p_2J+ip_3K:\ p_0,p_1,p_2,p_3\in\A\right\rbrace$$
and
$$\mathbb{H}_1:=\left\lbrace iq_0\e+q_1I+iq_2J+q_3K:\ q_0,q_1,q_2,q_3\in\A\right\rbrace.$$ \end{linenomath*}
Using (\ref{description Cl22}), the sub-algebra of elements of even degree is   \begin{linenomath*}
\begin{align*}\label{elempar}
Cl_0(2,2)& \simeq\left\lbrace \begin{pmatrix}
p & 0\\
0 & \widehat{p}
\end{pmatrix}:\ p\in\mathbb{H}_0\right\rbrace\simeq\mathbb{H}_0
\end{align*} 
and the set of elements of odd degree is
\begin{align*}
Cl_1(2,2)& \simeq\left\lbrace \begin{pmatrix}
0 & q\\
\widehat{q} & 0
\end{pmatrix}:\ q\in\mathbb{H}_1\right\rbrace\simeq\mathbb{H}_1.
\end{align*} 
Let us consider the map 
\begin{eqnarray*}
H:\hspace{1cm}\HAC\times\HAC &\longrightarrow& \AC \\
(\zeta,\zeta') &\longmapsto& \frac{1}{2}\left(\zeta\overline{\zeta'}+\zeta'\overline{\zeta}\right)
=\zeta_0\zeta_0'+\zeta_1\zeta_1'+\zeta_2\zeta_2'+\zeta_3\zeta_3'
\end{eqnarray*} 
where $\zeta=\zeta_0\e+\zeta_1I+\zeta_2J+\zeta_3K$ and $\zeta'=\zeta_0'\e+\zeta_1'I+\zeta_2'J+\zeta_3'K.$ It is $\AC$-bilinear and symmetric. The restriction of this map to $\mathbb{H}_0$ permits us to define the spin group
\begin{equation*}
Spin(2,2):=\left\lbrace p\in \mathbb{H}_0:\ H(p,p)=p_0^2-p_1^2+p_2^2-p_3^2=1 \right\rbrace\subset Cl_0(2,2).
\end{equation*} \end{linenomath*}
Now, if we consider the identification \begin{linenomath*}
\begin{align*}
\R^{2,2} & \simeq \{\sigma i x_0\e+x_1I+ix_2J+x_3K:\ x_0,x_1,x_2,x_3\in \R\} \notag \\
& \simeq \{q \in \mathbb{H}_1:\ q=-\widehat{\overline{q}}\}, 
\end{align*} 
we get the double cover  
\begin{eqnarray}\label{cubriente}
\Phi: & Spin(2,2) & \longrightarrow  SO(2,2)\\
& p & \longmapsto  (q\in\R^{2,2} \longmapsto p q\widehat{p}^{-1}\in\R^{2,2}). \notag
\end{eqnarray} \end{linenomath*}
Here and below $SO(2,2)$ stands for the component of the identity of the orthogonal group $O(2,2)$ (see \cite{oneill}). 

If we consider $\mathbb{H}_0$ as a complex vector space, with the complex structure given by the multiplication by $J$ on the right, the complex irreducible representation of $Cl(2,2)$ can be conveniently represented as follows: \begin{linenomath*}
$$\rho: Cl(2,2) \longrightarrow End(\mathbb{H}_0)$$ 
where
\begin{equation*}\label{rep_alg} \rho\begin{pmatrix}p & q\\ \widehat{q} & \widehat{p}\end{pmatrix}:\hspace{0.2in} \xi\in\mathbb{H}_0\hspace{.2cm}\simeq \hspace{.2cm}\begin{pmatrix}\xi \\ \sigma i\widehat{\xi}\end{pmatrix}\hspace{.3cm}\longmapsto\hspace{.3cm} \begin{pmatrix}p & q\\ \widehat{q} & \widehat{p}\end{pmatrix}\begin{pmatrix}\xi \\ \sigma i\widehat{\xi}\end{pmatrix}\hspace{.2cm}\simeq\hspace{.2cm} p\xi+\sigma iq\widehat{\xi}\in\mathbb{H}_0,
\end{equation*} 
so that the spinorial representation of $Spin(2,2)$ simply reads 
\begin{eqnarray*}
\rho_{|Spin(2,2)}: Spin(2,2) & \longrightarrow & End_{\C}(\mathbb{H}_0) \\
p & \longmapsto & (\xi\in \mathbb{H}_0 \longmapsto p\xi\in\mathbb{H}_0). \notag
\end{eqnarray*} 
Since $\rho(\sigma \e)^2=id_{\mathbb{H}_0},$ this representation splits into  
\begin{equation*}
\mathbb{H}_0=\Sigma^+\oplus \Sigma^-,
\end{equation*} 
where $\Sigma^+:=\{\xi\in \mathbb{H}_0:\ \sigma\xi=\xi\}$ and $\Sigma^-:=\{\xi\in \mathbb{H}_0:\ \sigma\xi=-\xi\}.$ 
Note that $\sigma\e\in\mathbb{H}_0$ represents the volume element $e_0\cdot e_1\cdot e_2\cdot e_3,$ which thus acts as $+id$ on $\Sigma^+$ and as $-id$ on $\Sigma^-.$ \end{linenomath*}

\subsection{Spinors under the splitting $\R^{2,2}=\R^{1,1}\times \R^{1,1}$}
We now consider the splitting $\R^{2,2}=\R^{1,1}\times\R^{1,1}$ and the corresponding natural inclusion \begin{linenomath*} $$SO(1,1)\times SO(1,1)\subset SO(2,2).$$ 

Using the definition (\ref{cubriente}) of $\Phi,$ it is easy to get 
\begin{equation*}
\Phi^{-1}(SO(1,1)\times SO(1,1))=\{\pm(\cosh(a)+i\sinh(a)I):\ a\in\A \}=:S_{\A}^1\ \subset Spin(2,2),
\end{equation*} where, for all $a=\frac{1+\sigma}{2}(u+v)+\frac{1-\sigma}{2}(u-v)\in\A,$ the $\A$-valued hyperbolic sin and cosin functions are such that 
\[\cosh(a)=\frac{1+\sigma}{2}\cosh(u+v)+\frac{1-\sigma}{2}\cosh(u-v) \hspace{.5cm}\mbox{and}\hspace{.5cm} \sinh(a)=\frac{1+\sigma}{2}\sinh(u+v)+\frac{1-\sigma}{2}\sinh(u-v). \] 
The transformation $\Phi(\pm(\cosh(a)+i\sinh(a)I))$ of $\R^{2,2}$ consists of a Lorentz rotation of angle $-2v$ in the first factor $\R^{1,1}$ and of angle $-2u$ in the second factor $\R^{1,1}.$ We thus have the double cover 
\begin{equation*}
\Phi:S_{\A}^1 \longrightarrow SO(1,1)\times SO(1,1);
\end{equation*}
we moreover have an isomorphism
\begin{equation*}
S_{\A}^1\simeq Spin'(1,1)\times_{\mathbb{Z}_2}Spin''(1,1),
\end{equation*} 
where $Spin'(1,1)$ and $Spin''(1,1)$ are two copies of the group $Spin(1,1)$ (for details see \cite{BP}). 

Finally, let $\rho_1$ and $\rho_2$ be the spinorial representations of $Spin'(1,1)$ and $Spin''(1,1)$ respectively, the representation 
\begin{eqnarray*}
Spin'(1,1)\times Spin''(1,1) & \longrightarrow & End_{\C}(\mathbb{H}_0)\\
(g_1,g_2) & \longmapsto & \rho(g):\xi \longmapsto g\xi,\notag
\end{eqnarray*}
where $g=g_1g_2\in S_{\A}^1,$ is equivalent to the representation $\rho_1\otimes\rho_2.$ 
\end{linenomath*}

\section{Previous work on the spinor representation of a Lorentzian surface in $\R^{2,2}$}\label{trabajo previo}
In this section, we recall the principal theorem concerning the spinor representation of a Lorentzian surface immersed in $\R^{2,2},$ and some fundamental results of the paper \cite{BP}.  

\subsection{Twisted spinor bundle}
Let $(M,g)$ be a Lorentzian surface and $E$ a bundle of rank 2 on $M,$ equipped with a fibre Lorentzian metric and a compatible connection; we assume that $M$ and $E$ are oriented (in space and in time), with given spin structures. 
We set $\Sigma:=\Sigma E\otimes \Sigma M,$ the tensor product of the spinor bundles $\Sigma E$ and $\Sigma M$ constructed from $E$ and $TM.$ If we denote by $Q_E$ and $Q_M$ the $SO(1,1)$ principal bundles of the oriented and orthonormal frames of $E$ and $TM,$ by $\tilde{Q}_E\to Q_E$ and $\tilde{Q}_M\to Q_M$ the given spin structures on $E$ and $TM,$ and by $p_E:\tilde{Q}_E\to M$ and $p_M:\tilde{Q}_M\to M$ the natural projections, we define the principal bundle over $M$ 
\begin{equation*}\tilde{Q}:=\tilde{Q}_E\times_M\tilde{Q}_M=\{(\tilde{s_1},\tilde{s_2})\in\tilde{Q}_E\times\tilde{Q}_M:p_E(\tilde{s_1})=p_M(\tilde{s_2}) \}.\end{equation*}
Since the representation $\rho$ introduced in the previous section is equivalent to the representation $\rho_1\otimes\rho_2$ of the structure group $Spin'(1,1)\times Spin''(1,1),$ the bundle $\Sigma$ is the vector bundle associated to $\tilde{Q}$ and to the representation $\rho,$ that is 
\begin{equation*}
\Sigma=\tilde{Q}\times \HO / \rho. 
\end{equation*}
Since the $\A$-bilinear map $H$ defined on $\mathbb{H}_0$ is $Spin(2,2)-$invariant, $\Sigma$ is also equipped with a $\A$-bilinear map $H.$ We may also define a $\mathbb{H}_1$-valued scalar product on $\Sigma$ by
\begin{equation}\label{prod_escal_vector}
\laa\varphi,\varphi'\raa:=\sigma i\ \overline{\xi'}\xi,
\end{equation}
where $\xi$ and $\xi'\in\mathbb{H}_0$ are respectively the components of $\varphi$ and $\varphi'$ in some local section of  $\tilde{Q}.$ This scalar product is $\A$-bilinear, and satisfies the following properties: for all $\varphi,\varphi'\in\Sigma$ and for all $X\in E\oplus TM$ 
\begin{equation}\label{prop_prod_escal_vector} 
\laa\varphi,\varphi'\raa=\overline{\laa\varphi',\varphi\raa}\hspace{0.2in}\text{and}\hspace{0.2in}\laa X\cdot\varphi,\varphi'\raa=-\widehat{\laa\varphi,X\cdot\varphi'\raa}.
\end{equation} 

\noindent {\bf Notation.} We will use the next notation: if $\tilde{s}\in\tilde{Q}$ is a given spinorial frame, the brackets $[\cdot]$ will denote the coordinates in $\HO$ of the spinor fields in the frame $\tilde{s},$ that is, for $\varphi\in\Sigma,$
\begin{equation*}
\varphi\simeq[\tilde{s},[\varphi]]\hspace{0.2in}\in\hspace{0.2in}\Sigma\simeq\tilde{Q}\times\HO/\rho.
\end{equation*}
We will also use the brackets to denote the coordinates in $\tilde{s}$ of the elements of the Clifford algebra $Cl(E\oplus TM):$ $X\in Cl_0(E\oplus TM)$ and $Y\in Cl_1(E\oplus TM)$ will be respectively represented by $[X]\in\HO$ and $[Y]\in\HI$ such that, in $\tilde{s},$ 
\begin{equation*}X\simeq\begin{pmatrix}[X] & 0\\ 0 & \widehat{[X]}\end{pmatrix} \hspace{0.2in}\text{and}\hspace{0.2in} Y\simeq\begin{pmatrix}0 & [Y]\\ \widehat{[Y]} & 0\end{pmatrix}.\end{equation*}
Note that
\begin{equation*}
[X\cdot\varphi]=[X][\varphi]\hspace{0.2in}\text{and}\hspace{0.2in}[Y\cdot\varphi]=\sigma i[Y]\widehat{[\varphi]}
\end{equation*}
and that, in a spinorial frame $\tilde{s}\in\tilde{Q}$ such that $\pi(\tilde{s})=(e_0,e_1,e_2,e_3),$ where $\pi:\tilde{Q}\to Q_1\times_M Q_2$ in the natural projection onto the bundle of the orthonormal frames of $E\oplus TM$ adapted to the splitting, $e_0,e_1,e_2$ and $e_3\in Cl_1(E\oplus TM)$ are respectively represented by $\sigma i\e, I, iJ$  and $K\in \HI$ (recall the Clifford map $\gamma$ at the beginning of Section \ref{prelim clifford}).

\subsection{Spinor representation of a Lorentzian surface in $\R^{2,2}$}
We keep the notation of the previous section, and recall that $\Sigma=\Sigma E\otimes \Sigma M$ is equipped with a natural connection
\begin{equation*}
\nabla:=\nabla^{\Sigma E}\otimes id_{\Sigma M}+id_{\Sigma E}\otimes\nabla^{\Sigma M},
\end{equation*}
the tensor product of the spinor connections on $\Sigma E$ and on $\Sigma M,$ and also with a natural action of the Clifford bundle 
\begin{equation*}
Cl(E\oplus TM)\simeq Cl(E)\widehat{\otimes} Cl(M);
\end{equation*} 
see \cite{BP}. This permits to define the Dirac operator acting on $\Gamma(\Sigma)$ by
\begin{equation*}
D\varphi:=-e_2\cdot\nabla_{e_2}\varphi+e_3\cdot\nabla_{e_3}\varphi
\end{equation*} 
where $(e_2,e_3)$ is an orthogonal basis tangent to $M$ such that $|e_2|^2=-1$ and $|e_3|^2=1.$ We have the following theorem: 

\begin{thm}\cite[Theorem 1]{BP}\label{thm representacion}
Suppose that $(M,g)$ is moreover simply connected, and let $\vec{H}$ be a section of $E.$ The following statements are equivalent.
\begin{enumerate}
\item There is a spinor field $\varphi\in\Gamma(\Sigma)$ with $H(\varphi,\varphi)=1$ solution of the Dirac equation
\begin{equation*}
D\varphi=\vec{H}\cdot\varphi.
\end{equation*}
\item There is a spinor field $\varphi\in\Gamma(\Sigma)$ with $H(\varphi,\varphi)=1$ solution of 
\begin{equation}\label{first killing equation}
\nabla_X\varphi=-\frac{1}{2}\sum_{j=2}^3\epsilon_je_j\cdot B(X,e_j)\cdot\varphi
\end{equation}
where $\epsilon_j=g(e_j,e_j)$ and $B:TM\times TM\to E$ is bilinear symmetric with $\frac{1}{2}tr_gB=\vec{H}.$ 
\item There is an isometric immersion $F:M\to \R^{2,2}$ with normal bundle $E$ and mean curvature vector $\vec{H}.$ 
\end{enumerate}
Moreover, $F=\int\xi,$ where $\xi$ is the closed $1$-form on $M$ with values in $\R^{2,2}$ defined by 
\begin{equation}\label{inmersion inducida}
\xi(X):=\laa X\cdot\varphi,\varphi\raa \hspace{0.3in}\in \hspace{0.3in} \R^{2,2}\subset \HI
\end{equation}
for all $X\in TM.$ 
\end{thm}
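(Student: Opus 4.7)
The plan is to prove the equivalences by the chain (2) $\Rightarrow$ (1), (2) $\Leftrightarrow$ (3), and finally (1) $\Rightarrow$ (2), using the Clifford identities of Section \ref{prelim}, the bilinear properties \eqref{prop_prod_escal_vector}, and the fundamental theorem of submanifolds for the splitting $E\oplus TM$ with Lorentzian fibres on each summand.

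The implication (2) $\Rightarrow$ (1) is a direct Clifford computation. Substituting \eqref{first killing equation} into $D\varphi=-e_2\cdot\nabla_{e_2}\varphi+e_3\cdot\nabla_{e_3}\varphi=\sum_i\epsilon_i e_i\cdot\nabla_{e_i}\varphi$ yields the double sum $-\frac{1}{2}\sum_{i,j}\epsilon_i\epsilon_j\, e_i\cdot e_j\cdot B(e_i,e_j)\cdot\varphi$. Pairs with $i\neq j$ cancel by the symmetry of $B$ together with $e_i\cdot e_j+e_j\cdot e_i=0$, and the diagonal $i=j$ terms reduce via $e_i\cdot e_i=-\epsilon_i$ to $\frac{1}{2}\sum_i\epsilon_i B(e_i,e_i)\cdot\varphi=\vec{H}\cdot\varphi$.

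For (2) $\Leftrightarrow$ (3), I would proceed through the $\R^{2,2}$-valued $1$-form $\xi(X):=\laa X\cdot\varphi,\varphi\raa$ of \eqref{inmersion inducida}. A direct expansion of $d\xi(X,Y)$ by the Leibniz rule and the torsion-free identity $\nabla_X Y-\nabla_Y X=[X,Y]$, followed by substitution of \eqref{first killing equation} and application of \eqref{prop_prod_escal_vector}, shows $d\xi=0$. On a simply connected $M$, the form integrates to $F:M\to\R^{2,2}$, and the nondegeneracy $H(\varphi,\varphi)=1$ makes $dF=\xi$ a pointwise isometry with normal bundle $E$ and second fundamental form $B$; hence $\vec{H}$ is the mean curvature vector. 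Conversely, from an immersion as in (3), the restriction to $M$ of a suitably normalized parallel spinor on $\R^{2,2}$ satisfies \eqref{first killing equation} by the spinorial Gauss formula, $B$ appearing as the second fundamental form.

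The main obstacle is (1) $\Rightarrow$ (2), since the Dirac equation is a single constraint whereas \eqref{first killing equation} is the much stronger Killing-type system. The plan is a pointwise argument: because $H(\varphi,\varphi)=1$, $\varphi$ is nowhere isotropic, and the Clifford orbit $c\mapsto c\cdot\varphi(m)$ spans $\Sigma_m$ as $c$ varies in $Cl(E_m\oplus T_mM)$. A dimension count combined with compatibility of $\nabla$ with the grading $\Sigma=\Sigma^+\oplus\Sigma^-$ induced by $\sigma\e$ forces $\nabla_X\varphi$ to have the form $\beta(X)\cdot\varphi$ with $\beta(X)$ a bivector mixing one tangent and one normal factor, yielding a bilinear map $B:TM\times TM\to E$. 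Symmetry of $B$ is the decisive step: comparing $\laa\nabla_X\varphi,Y\cdot\varphi\raa$ and $\laa\nabla_Y\varphi,X\cdot\varphi\raa$ via \eqref{prop_prod_escal_vector} produces an antisymmetric obstruction that vanishes exactly when $D\varphi=\vec{H}\cdot\varphi$. The trace identity $\frac{1}{2}tr_gB=\vec{H}$ is then read off the Dirac equation, finishing the reduction to (2).
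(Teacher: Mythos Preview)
This theorem is not proved in the present paper: it is quoted verbatim from \cite[Theorem~1]{BP} as background for the Weierstrass representation, so there is no ``paper's own proof'' to compare with. Your outline for $(2)\Rightarrow(1)$ is the standard Clifford contraction and is correct; your plan for $(2)\Leftrightarrow(3)$ via closedness of $\xi$ and restriction of a parallel spinor is likewise the expected route.

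The genuine gap is in your $(1)\Rightarrow(2)$. You claim that ``compatibility of $\nabla$ with the grading $\Sigma=\Sigma^+\oplus\Sigma^-$'' together with a dimension count forces $\nabla_X\varphi=\beta(X)\cdot\varphi$ with $\beta(X)$ a \emph{mixed} (tangent--normal) bivector. But the grading is by the volume element $e_0\cdot e_1\cdot e_2\cdot e_3\simeq\sigma\e$, which is \emph{central} in $Cl_0(E\oplus TM)$; hence every even Clifford element preserves $\Sigma^\pm$, and this compatibility imposes no restriction on $\beta$. Writing $\beta(X)\in Cl_0\simeq\HO$ in the basis $\{\e,\sigma\e,iI,\sigma iI,J,\sigma J,iK,\sigma iK\}$, the mixed bivectors $e_i\cdot e_j$ with $i\in\{0,1\}$, $j\in\{2,3\}$ span only the $J$- and $iK$-components, while $e_0\cdot e_1$, $e_2\cdot e_3$, the scalar, and the volume form sit in the $\e$- and $iI$-components. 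Differentiating $H(\varphi,\varphi)=1$ kills the $\e$-component (two real conditions), but you still have the $iI$-component (two more real parameters) unaccounted for. In \cite{BP} this is handled by using a further invariant of $\varphi$ (beyond $H(\varphi,\varphi)$) tied to the product structure $\Sigma=\Sigma E\otimes\Sigma M$; without such an additional constraint your dimension count does not close, and $\beta(X)$ cannot be reduced to the form $-\frac{1}{2}\sum_j\epsilon_j e_j\cdot B(X,e_j)$.
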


\begin{remar} 
The map $X\in E \longmapsto \laa X\cdot\varphi,\varphi\raa\in\R^{2,2}$ identifies $E$ with the normal bundle of the immersion; it preserves the metrics, the connections and the second fundamental forms; see \cite[Theorem 2]{BP}. 
\end{remar}

Applications of this spinor representation formula in Sections \ref{weierstrass representation}, \ref{spinor pseudo esfera} and \ref{flat pseudo esferas} will rely on the following simple observation: assume that $F_0:M \longrightarrow\R^{2,2}$ is an isometric immersion  and consider $\varphi=\sigma\e_{|M}$ the restriction to $M$ of the constant spinor field $\sigma\e$ of $\R^{2,2};$ if 
\begin{equation}\label{formula inmersion}
F=\int\xi,\hspace{0.3in} \xi(X)=\laa X\cdot\varphi,\varphi\raa
\end{equation} is the immersion given in the theorem, then $F\simeq F_0.$ This is in fact trivial since 
\begin{equation*}
\xi(X)=\laa X\cdot\varphi,\varphi\raa=-\overline{[\varphi]}[X]\widehat{[\varphi]}=[X]\simeq X
\end{equation*}
in a spinorial frame $\tilde{s}$ of $\R^{2,2}$ which is above the canonical basis (in such a frame $[\varphi]=\pm\sigma\e$). The representation formula \eqref{formula inmersion}, when written in moving frames adapted to the immersion, will give non trivial formulas. 

\section{Weierstrass representation of a Lorentzian surface in $\R^{2,2}$}\label{weierstrass representation}
In this section, we prove the generalized Weierstrass formula for a Lorentz surface conformally immersed into $\R^{2,2}$ (Theorem \ref{formula generalizada}). As a consequence of this formula, we deduce a generalized formula for a Lorentz surface conformally immersed in the three-dimensional Minkowski space $\R^{2,1},$ analogous to the case of surfaces in $\R^3$ (see \cite[Section 2]{konopelchenko}); in particular, we obtain the classical Weierstrass representation of a minimal Lorentz surface in $\R^{2,1}$ given by J. Konderak \cite{konderak}.      

Before proving Theorem \ref{formula generalizada} we note the following: the immersion $F:M\longrightarrow \R^{2,2}$ of the spinor representation theorem (Theorem \ref{thm representacion}) is given by
\begin{equation*}
F=\int\xi=\left( \int\xi_0,\int\xi_1,\int\xi_2,\int\xi_3 \right). 
\end{equation*}
This formula generalizes the Weierstrass representation: we consider $\sigma:TM \to TM$ the Lorentz structure on $M$ induced by the conformal class of the metric (see Appendix \ref{lorentz appendix} for the definition), and let $\alpha_0,\alpha_1,\alpha_2,\alpha_3:TM \to \A$ be the linear forms defined by
\begin{equation*}
\alpha_k(X):=\xi_k(X)+\sigma\ \xi_k(\sigma X),\hspace{0.3in}k=0,1,2,3.
\end{equation*}  
\begin{prop}
$M$ is a minimal Lorentzian surface (i.e. $\vec{H}=\vec{0}$) if and only if $\alpha_0,\alpha_1,\alpha_2$ and $\alpha_3$ are conformal $1-$forms. 
\end{prop}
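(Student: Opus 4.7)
The strategy is to decouple the two conditions hidden in ``conformal'' on a Lorentz surface: $\A$-linearity with respect to the Lorentz structure $\sigma$, and holomorphy in the Lorentz sense (locally having the form $f\,da$ with $\partial_{\widehat a}f = 0$). The first is automatic: using $\sigma^2 = 1$ one checks directly that
\begin{linenomath*}
\[
\alpha_k(\sigma X) = \xi_k(\sigma X) + \sigma\,\xi_k(X) = \sigma\bigl(\xi_k(X)+\sigma\,\xi_k(\sigma X)\bigr) = \sigma\,\alpha_k(X),
\]
\end{linenomath*}
so $\alpha_k$ is $\A$-linear for every choice of $F$. The only remaining content of the assertion ``$\alpha_k$ is conformal'' (in the sense of Appendix \ref{lorentz appendix}) is therefore the holomorphy part.

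To isolate this condition, I would fix a local conformal parameter $a = u + \sigma v$ on $M$, in which the Lorentz structure acts by $\sigma\partial_u = \partial_v$ and $\sigma\partial_v = \partial_u$. Expanding $\xi_k = dF_k = \partial_u F_k\,du + \partial_v F_k\,dv$ and evaluating $\alpha_k$ on $\partial_u,\partial_v$ collects the terms into
\begin{linenomath*}
\[
\alpha_k \;=\; 2\,\partial_a F_k\,da.
\]
\end{linenomath*}
Hence $\alpha_k$ is a conformal $1$-form if and only if $\partial_{\widehat a}\partial_a F_k = 0$, for every $k = 0,1,2,3$.

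It remains to show that simultaneous vanishing of $\partial_a\partial_{\widehat a}F_k$ for $k=0,\dots,3$ is equivalent to $\vec H \equiv 0$. This is the Lorentzian version of the classical identity that in a conformal parameter the position vector satisfies $\Box_g F = 2\,\vec H$. In such a parameter the induced metric reads $g = -\lambda^2\,da\,d\widehat a$ with $\lambda^2 > 0$, the scalar wave operator becomes $\Box_g = -(4/\lambda^2)\,\partial_a\partial_{\widehat a}$, and differentiating the conformality relations $g(\partial_a F,\partial_a F) = g(\partial_{\widehat a}F,\partial_{\widehat a}F) = 0$ shows that $\partial_a\partial_{\widehat a} F$ has no tangential component. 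The tension-field formula then yields $\vec H = -(2/\lambda^2)\partial_a\partial_{\widehat a} F$, so all four $\alpha_k$ are conformal iff $\vec H$ vanishes identically.

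The $\A$-linearity and the reduction to $\partial_{\widehat a}\partial_a F_k = 0$ are direct. The only mildly delicate point is the last step, where one must keep track of the signature-dependent signs in the Lorentzian wave operator and verify carefully that conformality of the parametrization makes $\partial_a\partial_{\widehat a}F$ purely normal; both are routine adaptations of their Riemannian analogues.
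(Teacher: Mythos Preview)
Your argument is correct. Both you and the paper begin by checking $\A$-linearity of the $\alpha_k$ and reducing to the question of whether $\psi_k := \alpha_k(\partial_u) = 2\partial_a F_k$ is conformal, i.e.\ whether $\partial_{\widehat a}\partial_a F_k = 0$. The divergence is in how this last condition is tied to $\vec H = 0$. The paper stays in the spinorial framework: it differentiates $\xi_k(X)=\laa X\cdot\varphi,\varphi\raa$ directly, uses the Christoffel identities $\nabla_{\partial_u}\partial_u=\nabla_{\partial_v}\partial_v$ and $\nabla_{\partial_u}\partial_v=\nabla_{\partial_v}\partial_u$, and reduces the Cauchy--Riemann system for $\psi_k$ to the Dirac-equation identity $\partial_u\cdot\nabla_{\partial_u}\varphi=\partial_v\cdot\nabla_{\partial_v}\varphi$, which is equivalent to $\vec H=0$. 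You instead bypass spinors entirely and invoke the classical tension-field identity $\vec H = -(2/\lambda^2)\,\partial_a\partial_{\widehat a}F$, verifying along the way that $\partial_a\partial_{\widehat a}F$ is normal via the conformality relations $\la\partial_a F,\partial_a F\ra=\la\partial_{\widehat a}F,\partial_{\widehat a}F\ra=0$. Your route is shorter and uses only the fact that $F$ is a conformal isometric immersion; the paper's route is more in keeping with its overall spinorial theme and makes the link to $D\varphi=\vec H\cdot\varphi$ explicit.
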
 
\begin{proof}
Let $a:=u+\sigma v\in U\subset\A \to M$ be a conformal parameter such that the metric is given by $\lambda^2(-du^2+dv^2)$ with $\lambda> 0,$ and suppose that $\left(\partial_u,\partial_v\right)$ is positively oriented. Using the Dirac equation $D\varphi=\vec{H}\cdot\varphi$ we get \begin{equation}\label{dirac minimal} \vec{H}=\vec{0}\hspace{0.3in}\mbox{iff}\hspace{0.3in} \partial_u\cdot\nabla_{\partial_u}\varphi=\partial_v\cdot\nabla_{\partial_v}\varphi. 
\end{equation} 
Since the linear forms $\alpha_k$ ($k=0,1,2,3$) preserve the Lorentz structure (i.e. $\alpha_k(\sigma X)=\sigma \alpha_k(X)$ for all $X\in TM$), we can consider the maps $\psi_0,\psi_1,\psi_2,\psi_3:M \longrightarrow \A$ such that
\begin{equation*}
\alpha_0=\psi_0da,\hspace{0.2in}\alpha_1=\psi_1da,\hspace{0.2in}\alpha_2=\psi_2da,\hspace{0.2in}\alpha_3=\psi_3da;
\end{equation*}
more explicitly we have $\psi_k=\alpha_k(\partial_u)=\xi_k(\partial_u)+\sigma \xi_k(\partial_v)$ ($k=0,1,2,3$). Using $\nabla_{\partial_u}\partial_u=\nabla_{\partial_v}\partial_v$ we easily get 
\begin{equation}\label{CR 1}
\partial_u(\xi_k(\partial_u))=\partial_v(\xi_k(\partial_v)) \hspace{0.3in} \mbox{iff} \hspace{0.3in} \partial_u\cdot\nabla_{\partial_u}\varphi=\partial_v\cdot\nabla_{\partial_v}\varphi,
\end{equation}
whereas that $\nabla_{\partial_u}\partial_v=\nabla_{\partial_v}\partial_u$ implies 
\begin{equation}\label{CR 2}
\partial_v(\xi_k(\partial_u))=\partial_u(\xi_k(\partial_v)) \hspace{0.3in} \mbox{iff} \hspace{0.3in} \partial_u\cdot\nabla_{\partial_v}\varphi=\partial_v\cdot\nabla_{\partial_u}\varphi;
\end{equation} from \eqref{dirac minimal} and \eqref{CR 1}-\eqref{CR 2} we have $\vec{H}=0$ if and only if $\psi_k$ ($k=0,1,2,3$) satisfy $\partial_v\psi_k=\sigma \partial_u\psi_k,$ i.e. if and only if $\psi_0,\psi_1,\psi_2$ and $\psi_3$ are conformal maps (see Equation \eqref{eqn crl} in the appendix). 
\end{proof}
\noindent Thus, if $M$ is a minimal Lorentzian surface in $\R^{2,2},$ 
\begin{align*}
F &=\Re e\left( \int\alpha_0,\int\alpha_1,\int\alpha_2,\int\alpha_3 \right) \\
&=\Re e\left( \int\psi_0da,\int\psi_1da,\int\psi_2da,\int\psi_3da \right) 
\end{align*}
where $\psi_0,\psi_1,\psi_2$ and $\psi_3$ are conformal maps. This is the Weierstrass representation of a minimal Lorentzian surface in $\R^{2,2},$ which extends the classical Weierstrass representation of a Lorentz surface in $\R^{2,1}$ given by J. Konderak in \cite[Theorem 2]{konderak}. This representation was also given by M.P. Dussan and M. Magid in \cite[Theorem 2.1]{dussan}; in contrast, we have here a spinorial interpretation of this representation.  

\subsection{The generalized Weierstrass representation. Proof of Theorem \ref{formula generalizada}.}\label{subseccion formula generalizada} The proof of the direct statement is obtained easily. The induced metric and the Lorentzian norm of the mean curvature vector are calculated straightforwardly. 
\begin{linenomath*}
We prove the converse statement. We suppose that $(M,g)$ is a simply connected Lorentzian surface immersed in $\R^{2,2}.$ We consider the spinor field $\varphi\in\Sigma$ solution of the Dirac equation $$D\varphi=\vec{H}\cdot\varphi\hspace{0.2in}\mbox{and}\hspace{0.2in}H(\varphi,\varphi)=1$$ obtained by the restriction to $M$ of the constant spinor $\pm \sigma\e\in\HO$ of $\R^{2,2}:$ it induces the immersion \eqref{inmersion inducida} (see Section \ref{trabajo previo}). We consider a conformal chart $(U,a=u+\sigma v)$ such that the metric is given by \begin{equation}\label{metric of surface}
g_{|U}=\lambda^2(-du^2+dv^2)\hspace{0.2in}\mbox{with}\hspace{0.2in}\lambda> 0,
\end{equation} and suppose that $(\partial_u,\partial_v)$ is positively oriented. We moreover choose an orthonormal and positively oriented basis $(e_0,e_1)$ of $E$ (normal to the surface $M$): $(e_0,e_1,\frac{\partial_u}{\lambda},\frac{\partial_v}{\lambda})$ is adapted to the immersion $M\subset \R^{2,2}.$ Now, let $\beta:M\to \R$ be a solution of the system 
\begin{equation}\label{rotacion normal}
\begin{cases}
\partial_u\beta=-2\frac{\partial_u\lambda}{\lambda}-\la\nabla_{\partial_u}e_0,e_1 \ra \\ 
\partial_v\beta=-2\frac{\partial_v\lambda}{\lambda}-\la\nabla_{\partial_v}e_0,e_1 \ra
\end{cases}
\end{equation}(by a direct computation the compatibility equation of \eqref{rotacion normal}, $\partial_v\la\nabla_{\partial_u}e_0,e_1 \ra=\partial_u\la\nabla_{\partial_v}e_0,e_1 \ra$ is satisfied, and thus the system \eqref{rotacion normal} is solvable) and define the normal vectors
\begin{equation*}
{\tt e_0}:=\cosh\beta\ e_0+\sinh\beta\ e_1\hspace{0.2in}\mbox{and}\hspace{0.2in} {\tt e_1}:=\sinh\beta\ e_0+\cosh\beta\ e_1
\end{equation*} obtained from $(e_0,e_1)$ by a Lorentzian rotation of angle $\beta.$ Setting $${\tt e_2}:=\frac{\partial_u}{\lambda}\hspace{0.2in}\mbox{and}\hspace{0.2in}{\tt e_3}:=\frac{\partial_v}{\lambda},$$ we consider the spinorial frame $\tilde{s}\in\tilde{Q}$ such that $\pi(\tilde{s})=({\tt e_0},{\tt e_1},{\tt e_2},{\tt e_3})\in Q;$ the coordinates of ${\tt e_0},{\tt e_1},{\tt e_2}$ and ${\tt e_3},$ in the spinorial frame $\tilde{s},$ are given by $\sigma i\e, I, iJ$ and $K\in \HI$ respectively. In $\tilde{s},$ the Dirac equation $D\varphi=\vec{H}\cdot\varphi$ reads 
\begin{align*}
2\lambda\ \widehat{[\vec{H}]}\ [\varphi]&=-iJ\partial_u[\varphi]+K\partial_v[\varphi]-\frac{1}{2} \left(\frac{\partial_u\lambda}{\lambda}+\sigma(\partial_v\beta+\la\nabla_{\partial_v}e_0,e_1\ra)\right)iJ\ [\varphi] \\ 
&\ \ \ +\frac{1}{2}\left(\frac{\partial_v\lambda}{\lambda}+\sigma(\partial_u\beta+\la\nabla_{\partial_u}e_0,e_1\ra)\right)K\ [\varphi] \\
&=-iJ\partial_u[\varphi]+K\partial_v[\varphi]+\frac{1}{\lambda}\left\lbrace \left(-\partial_{\widehat{a}}\lambda+\frac{\sigma}{2}\partial_v\lambda\right)iJ- \sigma\left(\partial_{\widehat{a}}\lambda+\frac{1}{2}\partial_u\lambda \right)K \right\rbrace[\varphi],
\end{align*} where we use the system \eqref{rotacion normal}. Writting \begin{equation}\label{mean vector} \vec{H}:=h_0{\tt e_0}+h_1{\tt e_1}\ \in\ E \hspace{0.2in}\mbox{and}\hspace{0.2in}[\varphi]:=\varphi_0\e+\varphi_1 iI+\varphi_2 J+\varphi_3 iK\ \in \ \HO \end{equation} the coordinates of spinor field $\varphi$ in $\tilde{s},$ we easily get the system \eqref{ecuacion dirac R22 1} with
\begin{equation*}
\phi_1=\lambda^{-\frac{1}{2}}(-\varphi_3+\sigma\varphi_2),\hspace{0.2in} 
\phi_2=\lambda^{-\frac{1}{2}}(-\varphi_0+\sigma\varphi_1),\hspace{0.2in}
\psi_1=\lambda^{\frac{3}{2}}(\varphi_0+\sigma\varphi_1),\hspace{0.2in}
\psi_2=\lambda^{\frac{3}{2}}(\varphi_3+\sigma\varphi_2),
\end{equation*} and
\begin{equation*}
p=\lambda^{-1}(h_1+h_0),\hspace{0.3in} q=\lambda^3(h_1-h_0).
\end{equation*} Using the relation $H(\varphi,\varphi)=\varphi_0^2-\varphi_1^2+\varphi_2^2-\varphi_3^2=1$ we get $$|\psi_2\phi_1-\psi_1\phi_2|^2 =\lambda^2(\varphi_0^2-\varphi_1^2+\varphi_2^2-\varphi_3^2) \widehat{(\varphi_0^2-\varphi_1^2+\varphi_2^2-\varphi_3^2)}=\lambda^2,$$ thus the metric $g$ (given in \eqref{metric of surface}) of $M$ satisfies \eqref{metrica R22}, and the Lorentzian norm of the mean curvature vector $\vec{H}$ (defined in \eqref{mean vector}) satisfies $$|\vec{H}|^2=-h_0^2+h_1^2=\lambda^{-2}\ p q$$ as is \eqref{curvatura media R22}. 
Finally, if we write the induced immersion as $F=\int\xi$ with $$\xi(X)=\laa X\cdot\varphi,\varphi\raa:=\xi_0(X)\sigma i\e+\xi_1(X)I+\xi_2(X)iJ+\xi_3(X)K,$$ we get by a direct computation  
\begin{align*}
\xi_0+\xi_1 &=-\left(\psi_1\widehat{\phi}_1da+\widehat{\psi}_1\phi_1d\widehat{a}\right),& \hspace{0.2in} 
\xi_2+\sigma\xi_3 &=\left( \psi_1\widehat{\phi}_2da+ \widehat{\psi}_2\phi_1d\widehat{a} \right), \\ 
\xi_0-\xi_1 &=\left(\psi_2\widehat{\phi}_2da+\widehat{\psi}_2\phi_2d\widehat{a}\right),& \hspace{0.2in} 
\xi_2-\sigma\xi_3 &=\left( \psi_2\widehat{\phi}_1da+\widehat{\psi}_1\phi_2d\widehat{a} \right), 
\end{align*} and thus the formulas \eqref{formulas inmersion R22 1}.
\end{linenomath*}
\subsection{Lorentzian surfaces in $\R^{2,1}$}
We suppose that the vector bundle $E$ is flat, i.e. is of the form $E=\R e_0\oplus \R e_1$ where $e_0$ and $e_1$ are unit, orthogonal and parallel sections of $E$ such that $\la e_0,e_0\ra=-1$ and $\la e_1,e_1\ra=1;$ we moreover assume that $e_0$ is future-directed and that $(e_0,e_1)$ is positively oriented. We consider the isometric embeddings of $\R^{2,1}$ and $\R^{1,2}$ in $\R^{2,2}\subset \mathbb{H}_1$ given by 
\begin{equation*}
\R^{2,1}=(\sigma i\e)^{\perp}\hspace{0.2in}\text{and}\hspace{0.2in}\R^{1,2}=(I)^{\perp}, 
\end{equation*} 
where $\sigma i\e$ and $I$ are the first two vectors of the canonical basis of $\R^{2,2}\subset \mathbb{H}_1.$

If we assume that the functions $\phi_1,\phi_2,\psi_1,\psi_2$ in Theorem \ref{formula generalizada} satisfy $\psi_2=\pm\widehat{\phi}_1$ and $\phi_2=\pm\widehat{\psi}_1$ (and consequently that $p=q$), then $F_0\equiv 0,$ and we thus obtain the following generalized Weierstrass representation for a Lorentzian surface into $\R^{2,1}:$
\begin{coro}\label{formula R21} Let $\phi_2,\psi_2:\A\to \A$ and  $p:\A\to \R$ be smooth functions such that 
\begin{equation*}
\begin{split}
\partial_a\phi_2 & =-p\psi_2 \\
\partial_{\widehat{a}}\psi_2 & =-p\phi_2,
\end{split}
\end{equation*} and $|\phi_2|^2\neq |\psi_2|^2.$ The formulas 
\begin{align*}
F_1 &=-\int_{\Gamma}\left(\psi_2\widehat{\phi}_2da+\widehat{\psi}_2\phi_2d\widehat{a}\right), \notag \\
F_2+\sigma F_3 &=\pm \int_{\Gamma}\left( \widehat{\phi}_2^2da+ \widehat{\psi}_2^2 d\widehat{a} \right), \notag\\
F_2-\sigma F_3 &=\pm \int_{\Gamma}\left( \psi_2^2 da+\phi_2^2d\widehat{a} \right),
\end{align*} define a conformal immersion $F=(F_1,F_2,F_3):\A \longrightarrow \R^{2,1},$ whose induced metric  is given by
\begin{equation*}
g=-(|\phi_2|^2-|\psi_2|^2)^2dad\widehat{a},
\end{equation*} and the mean curvature satisfies 
\begin{equation*}
H^2=\frac{p^2}{(|\phi_2|^2-|\psi_2|^2)^2}.
\end{equation*} 

Conversely, a simply-connected Lorentzian surface in $\R^{2,1}$ may be described in that way.
\end{coro}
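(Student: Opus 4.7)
The forward direction is a direct substitution into Theorem \ref{formula generalizada}. Under the ansatz $\psi_2=\epsilon\widehat{\phi}_1$, $\phi_2=\epsilon\widehat{\psi}_1$ with a single sign $\epsilon\in\{\pm 1\}$, the elementary identity $\widehat{\partial_a f}=\partial_{\widehat{a}}\widehat{f}$ shows that the $\alpha=2$ equations in \eqref{ecuacion dirac R22 1} are the Lorentz conjugates of the $\alpha=1$ equations, forcing $q=p$ (in agreement with the corollary's use of a single real function $p$). Plugging into \eqref{formulas inmersion R22 1}, the first two lines satisfy $F_0+F_1=-(F_0-F_1)$, so $F_0\equiv 0$ (the image lies in $\R^{2,1}=(\sigma i\e)^\perp$) and $F_1$ reduces to the stated formula; the last two lines simplify via $\widehat{\phi}_1=\epsilon\psi_2$, $\widehat{\psi}_1=\epsilon\phi_2$ to the stated formulas for $F_2\pm\sigma F_3$. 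Since $\psi_2\phi_1-\psi_1\phi_2=-\epsilon(|\phi_2|^2-|\psi_2|^2)$ is real-valued, \eqref{metrica R22} and \eqref{curvatura media R22} reduce at once to the metric and mean-curvature formulas of the corollary.

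For the converse, let $F\colon M\to\R^{2,1}$ be an isometric immersion of a simply-connected Lorentzian surface, and compose with the inclusion $\R^{2,1}=(\sigma i\e)^\perp\hookrightarrow\R^{2,2}$. The normal bundle in $\R^{2,2}$ admits a parallel splitting $E=\R e_0\oplus\R e_1$ with $e_0:=\sigma i\e$ constant, so $\langle\nabla_X e_0,e_1\rangle\equiv 0$ and $\vec{H}\perp e_0$. Applying Theorem \ref{formula generalizada} yields $\phi_1,\phi_2,\psi_1,\psi_2$ and $p,q$. The constraints $F_0\equiv 0$ and $\langle\vec{H},e_0\rangle=0$ immediately give the scalar relations $\psi_1\widehat{\phi}_1=\psi_2\widehat{\phi}_2$ and $p=q$. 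The full $\A$-valued symmetry $\psi_2=\epsilon\widehat{\phi}_1$, $\phi_2=\epsilon\widehat{\psi}_1$ then follows from the pointwise Clifford identity $e_0\cdot\varphi=\varphi$, which holds for the spinor $\varphi=\sigma\e_{|M}$ used in Theorem \ref{formula generalizada} because a direct computation in the canonical frame of $\R^{2,2}$ gives $[e_0\cdot\varphi]=\sigma i\cdot(\sigma i\e)\cdot\widehat{\sigma\e}=\sigma\e=[\varphi]$. Rewriting this identity in the adapted spinorial frame of the proof of Theorem \ref{formula generalizada} and using the coordinate formulas for $\phi_\alpha,\psi_\alpha$ recovered there yields the desired relations.

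The step requiring the most care is this last translation. Because $e_0$ is parallel in $E$, the system \eqref{rotacion normal} forces $\beta=-2\ln\lambda+\text{const}$, so in the adapted frame $e_0=\cosh\beta\,{\tt e_0}-\sinh\beta\,{\tt e_1}$. Substituting into $e_0\cdot\varphi=\varphi$ gives a linear system on the components $(\varphi_0,\varphi_1,\varphi_2,\varphi_3)\in\A^4$ of $[\varphi]\in\HO$ whose coefficients involve $\cosh\beta,\sinh\beta$, hence $\lambda^{\pm 1}$; after combining with the formulas $\phi_1=\lambda^{-1/2}(-\varphi_3+\sigma\varphi_2)$, $\psi_2=\lambda^{3/2}(\varphi_3+\sigma\varphi_2)$ and their analogues from Section \ref{subseccion formula generalizada}, this system becomes equivalent to the $\A$-identities $\psi_2=\epsilon\widehat{\phi}_1$ and $\phi_2=\epsilon\widehat{\psi}_1$. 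The sign $\epsilon=\pm 1$ reflects the residual $\mathbb{Z}_2$-ambiguity in the spin lift, equivalently the two possible constant spinors in $\HO$ satisfying $e_0\cdot\varphi=\pm\varphi$.
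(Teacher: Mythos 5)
Your proposal is correct and follows essentially the same route as the paper: the corollary is obtained by specializing Theorem \ref{formula generalizada} under the reduction $\psi_2=\pm\widehat{\phi}_1$, $\phi_2=\pm\widehat{\psi}_1$ (which forces $p=q$ and $F_0\equiv 0$), and the converse rests on the characterization $e_0\cdot\varphi=\varphi$ of immersions into $\R^{2,1}=(\sigma i\e)^{\perp}$ recorded in Remark \ref{inmersion R21 R12}. You actually supply more detail than the paper, which merely asserts the reduction; the only slips are cosmetic --- since $\beta=-2\ln\lambda+c$, the coefficients involve $\lambda^{\pm2}$ rather than $\lambda^{\pm1}$, and the additive constant $c$ must be normalized to $0$ so that the relations $\psi_2=\epsilon\widehat{\phi}_1$, $\phi_2=\epsilon\widehat{\psi}_1$ (and $p=q$) come out with factor exactly $\pm 1$ rather than $e^{\pm c}$.
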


Writing this generalized formula in the coordinates $(s,t)$ given by $a=\frac{1+\sigma}{2}s+\frac{1-\sigma}{2}t$ (see \eqref{coordenadas st} in the appendix), we obtain exactly the Weierstrass representation formula described by S. Lee in \cite[Theorem 2]{lee}.  On the other hand, using this generalized formula in $\R^{2,1},$ in the case $p\equiv 0,$ we obtain the classical Weierstrass representation of a minimal Lorentz surface into $\R^{2,1}:$ indeed, the formulas
\begin{align}\label{inmersion minima R21}
F_1 &=\Re e\int_{\Gamma}\frac{1}{2}\chi_1\chi_2da, \notag \\
F_2 &=\Re e\int_{\Gamma}\left( \chi_1^2 +\chi_2^2\right)da, \notag \\
F_3 &=\Im m\int_{\Gamma}\left( \chi_1^2-\chi_2^2\right)da,
\end{align} 
where $\chi_1=\widehat{\phi}_2$ and $\chi_2=\psi_2$ are conformal maps, define a minimal conformal immersion of a Lorentz surface into $\R^{2,1}.$ A similar representation was already given by J. Konderak in \cite[Theorem 4]{konderak}: if we suppose that $\chi_1\widehat{\chi}_1\neq 0$ and define $$\Phi:=\chi_1^2da\hspace{0.3in}\mbox{and}\hspace{0.3in}g:=\frac{\chi_2}{\chi_1},$$ then $\Phi$ is a conformal $1-$form such that $\Phi\widehat{\Phi}>0,$ $1-g\widehat{g}\neq 0,$ and the formulas in \eqref{inmersion minima R21} may be written in the form 
\begin{equation*}
(F_1,F_2,F_3)=\left(\Re e\int_{\Gamma}\frac{1}{2}g\Phi,\Re e\int_{\Gamma}(1+g^2)\Phi,\Re e\int_{\Gamma}\sigma(1-g^2)\Phi \right),
\end{equation*} 
which is exactly the representation obtained by Konderak in \cite{konderak}.

\begin{remar}\label{generalized in R12}Similarly, the reduction $\psi_2=\mp\widehat{\phi}_1$ and $\phi_2=\pm\widehat{\psi}_1$ implies the generalized Weierstrass representation for a Lorentzian surface into $\R^{1,2}$ (see details in \cite{VP}).
\end{remar}

\section{Spinor representation of a Lorentzian surface in pseudo-spheres of $\R^{2,2}$}\label{spinor pseudo esfera}
The aim of this section is to deduce spinor representations for isometric immersions of a Lorentzian surface in the pseudo-spheres of $\R^{2,2};$ we obtain characterizations which are different to the characterizations given by M.A. Lawn and J. Roth in \cite{lawn_roth}.

Keeping the notation of Section \ref{prelim}, the map $(\xi,q)\in\HO\times \HI \longmapsto \sigma i\ \xi\ q \in\HO$ defines a linear action of $\HI$ by the multiplication on the right on $\HO;$ the spinor bundle $\Sigma$ is thus also naturally equipped with a linear right-action of $\HI:$ 
\begin{equation*}
(\varphi,q) \longmapsto \varphi\bullet q.
\end{equation*}
With respect to this structure the Clifford action satisfies 
\begin{equation*}
X\cdot (\varphi\bullet q)=-(X\cdot \varphi)\bullet \widehat{q}
\end{equation*} 
for all $\varphi\in\Sigma,$ $X\in E\oplus TM$ and $q\in \HI.$

We suppose that $E=\R e_0\oplus \R e_1$ where $e_0$ and $e_1$ are unit, orthogonal and parallel sections of $E$ such that $\la e_0,e_0\ra=-\la e_1,e_1\ra=-1;$ we moreover assume that $e_0$ is future-directed and that $(e_0,e_1)$ is positively oriented. 
Let $\vec{H}$ be a section of $E$ and $\varphi\in\Gamma(\Sigma)$ be a solution of \begin{equation}\label{ecuacion de dirac}
D\varphi=\vec{H}\cdot\varphi\hspace{0.2in}\mbox{and}\hspace{0.2in}H(\varphi,\varphi)=1.
\end{equation} According to the spinor representation theorem (Theorem \ref{thm representacion}), the spinor field $\varphi$ defines an isometric immersion $M\hookrightarrow\R^{2,2}$ (unique, up to translations), with normal bundle $E$ and mean curvature vector $\vec{H}.$ We give a characterization of the isometric immersion in the pseudo-spheres $\mathbb{H}^{2,1}$ and $\mathbb{S}^{1,2}$ (defined in (\ref{anti de sitter})-(\ref{pseudo esfera}) respectively), up to translations, in terms of the spinor field $\varphi.$
\begin{prop}\label{carac_inm_isom}
{\it 1.} Consider the function $F=\laa e_0\cdot\varphi,\varphi\raa,$ and suppose that 
\begin{equation}\label{inmersion H21}
\vec{H}=e_0+He_1\hspace{0.3in}\text{and}\hspace{0.3in} dF(X)=\laa X\cdot\varphi,\varphi\raa 
\end{equation} for all $X\in TM.$ Then the isometric immersion $M\hookrightarrow\R^{2,2}$ belongs to $\mathbb{H}^{2,1}.$\\ \\
{\it 2.} Consider the function $F=\laa -e_1\cdot\varphi,\varphi\raa,$ and suppose that \begin{equation}\label{inmersion S21}
\vec{H}=-He_0+e_1\hspace{0.3in}\text{and}\hspace{0.3in} dF(X)=\laa X\cdot\varphi,\varphi\raa 
\end{equation} 
for all $X\in TM.$ Then the isometric immersion $M\hookrightarrow\R^{2,2}$ belongs to $\mathbb{S}^{1,2}.$ 

Reciprocally, if $M\hookrightarrow\R^{2,2}$ belongs to $\mathbb{H}^{2,1}$ (resp. to $\mathbb{S}^{1,2}$), then \eqref{inmersion H21} (resp. \eqref{inmersion S21}) holds for some unit, orthogonal and parallel sections $(e_0,e_1)$ of $E.$
\end{prop}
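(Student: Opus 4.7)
My plan for the direct direction is to exploit two features of the spin representation. First, the hypothesis $dF(X) = \laa X\cdot\varphi,\varphi\raa = \xi(X)$ is precisely $dF = dF_0$ for the immersion $F_0$ produced by Theorem~\ref{thm representacion}; thus $F = F_0 + c$ for some constant $c \in \R^{2,2}$, which says that up to an ambient translation $F$ \emph{is} the immersion. Second, the map $Y \in \R^{2,2} \longmapsto \laa Y\cdot\varphi, \varphi \raa \in \R^{2,2}$ is an isometry whenever $H(\varphi,\varphi) = 1$. Indeed, in a local spinorial frame one computes
\[ \laa Y\cdot\varphi, \varphi \raa \;=\; \sigma i\,\overline{[\varphi]}\,[Y\cdot\varphi] \;=\; -\,\overline{[\varphi]}\,[Y]\,\widehat{[\varphi]}, \]
and since $H(\varphi,\varphi)=1$ forces $[\varphi]\in Spin(2,2)$ with inverse $\overline{[\varphi]}$, this map equals $-\Phi([\varphi])^{-1}([Y])$, hence preserves the metric.

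In case 1 this gives $\la F, F \ra = \la e_0, e_0 \ra = -1$, so $F$ (and therefore the immersion up to translation) lies in $\mathbb{H}^{2,1}$. Case 2 is identical with $Y = -e_1$: one obtains $\la F, F \ra = \la e_1, e_1 \ra = 1$, placing the immersion in $\mathbb{S}^{1,2}$, and the sign in $F = \laa -e_1\cdot\varphi, \varphi \raa$ is there to make the companion condition $\vec{H} = -He_0 + e_1$ the natural one.

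For the converse, given $F_0 : M \to \mathbb{H}^{2,1} \subset \R^{2,2}$, I would take $e_0 := F_0$, the position vector. It is a unit timelike normal because $\la F_0, F_0 \ra = -1$ and $\la F_0, X \ra = \tfrac{1}{2} X\la F_0, F_0 \ra = 0$ for any tangent $X$, and it is parallel in the normal bundle because $\nabla^{\R^{2,2}}_X F_0 = X$ is tangent. Choose $e_1$ as the unit spacelike normal orthogonal to $e_0$ with $(e_0, e_1)$ positively oriented; it is automatically parallel as a unit section of the rank-one complement of $\R e_0$ in $E$. The Gauss formula, combined with the fact that the second fundamental form of $\mathbb{H}^{2,1}$ in $\R^{2,2}$ equals $\la \cdot, \cdot \ra F_0$, gives $\vec{H}_{\R^{2,2}} = \vec{H}_{\mathbb{H}^{2,1}} + e_0 = He_1 + e_0$. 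The remaining condition $dF = \xi$ then follows from the observation at the end of Section~\ref{trabajo previo}: applied to the constant spinor $\varphi = \sigma\e_{|M}$, one has $\laa Y\cdot\varphi, \varphi \raa = Y$ for every $Y \in \R^{2,2}$, so $F = e_0 = F_0$ and $dF = \xi$. The $\mathbb{S}^{1,2}$ case is entirely analogous, with $e_1 := -F_0$ (and $e_0$ the future-directed timelike normal) chosen so that the constant-spinor identity yields $F = -e_1 = F_0$.

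The main technical point is the isometry property used above, which reduces to the identity $[\varphi]\,\overline{[\varphi]} = \e$ characterizing $Spin(2,2)$; no further Clifford manipulations are needed. The residual subtlety is the bookkeeping of signs between the $\mathbb{H}^{2,1}$ and $\mathbb{S}^{1,2}$ cases, which is dictated entirely by the opposite signatures $\la e_0, e_0 \ra = -1$, $\la e_1, e_1 \ra = 1$ together with the Clifford action formula $[Y\cdot\varphi] = \sigma i\,[Y]\,\widehat{[\varphi]}$.
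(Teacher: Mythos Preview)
Your argument is correct and follows the same route as the paper: for the direct statement you use that $F$ is a primitive of $\xi$ (hence is the immersion up to translation) and that $Y\mapsto\laa Y\cdot\varphi,\varphi\raa$ is an isometry, so $\la F,F\ra=\la e_0,e_0\ra=-1$; for the converse you pick $e_0$ to be (the preimage of) the position vector, which is exactly the paper's ``choose $(e_0,e_1)$ so that $\laa e_0\cdot\varphi,\varphi\raa$ is normal to $\mathbb{H}^{2,1}$''. The only cosmetic difference is that you phrase the converse via the constant spinor $\varphi=\sigma\e_{|M}$, whereas the paper keeps $\varphi$ abstract and appeals to the identification $E\simeq$ normal bundle from Remark~1; both viewpoints coincide once the isometry property $[\varphi]\overline{[\varphi]}=\e$ is in hand.
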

\begin{proof}
Assuming that \eqref{inmersion H21} holds, the function $F$ is a primitive of the $1-$form $\xi(X)=\laa X\cdot\varphi,\varphi\raa,$ and is thus the isometric immersion defined by $\varphi$ (uniquely defined, up to translations); since the norm of $\laa e_0\cdot\varphi,\varphi\raa\in\R^{2,2}\subset\HI$ coincides with the norm of $e_0,$ and is thus constant equal to $-1,$ the immersion belongs to $\mathbb{H}^{2,1}.$  For the converse statement, we choose $(e_0,e_1)$ such that $\laa e_0\cdot\varphi,\varphi\raa$ is normal to $\mathbb{H}^{2,1}$ in $\R^{2,2}.$ Writing the spinors in a frame $\tilde{s}$ adapted to $(e_0,e_1,e_2,e_3),$ we easily deduce \eqref{inmersion H21} since $\laa e_0\cdot\varphi,\varphi\raa$ is the immersion. The proof for the case of the pseudo-sphere $\mathbb{S}^{1,2}$ is analogous.
\end{proof}

\begin{remar}\label{inmersion R21 R12}The isometric immersion in $\R^{2,1}$ or in $\R^{1,2},$ in terms of the spinor field $\varphi,$ is characterized by the following conditions:
\begin{equation}\label{R21}\vec{H}=He_1\hspace{0.2in}\mbox{and}\hspace{0.2in}e_0\cdot\varphi=\varphi \end{equation} or
\begin{equation}\label{R12}\vec{H}=He_0\hspace{0.2in}\mbox{and}\hspace{0.2in}e_1\cdot\varphi=-\varphi\bullet I, \end{equation} respectively; 
%
see details in \cite[Proposition 2.4]{BP}.
\end{remar}

Now, we assume that $M\subset\mathcal{Q}(c)\subset \R^{2,2},$ with $c=\pm 1,$ where $\mathcal{Q}(-1)$ is the Anti-de Sitter space $\mathbb{H}^{2,1}$ and $\mathcal{Q}(+1)$ is the pseudo-sphere $\mathbb{S}^{1,2},$ and consider $e_0$ and $e_1$ timelike and spacelike unit vector fields such that 
\begin{equation*}
\R^{2,2}=\R e_{\frac{1+c}{2}}\oplus_{\perp} T\mathcal{Q}(c)\hspace{0.3in} \text{and}\hspace{0.3in} T\mathcal{Q}(c)=\R e_{\frac{1-c}{2}}\oplus_{\perp} TM.
\end{equation*}
The intrinsic spinors of $M$ identify with the spinors of $\mathcal{Q}(c)$ restricted to $M,$ which in turn identify with the positive spinors of $\R^{2,2}$ restricted to $M$ (Propositions \ref{isomorfismo haces 1} and \ref{isomorfismo haces 2} below), which, together with Proposition \ref{carac_inm_isom}, will give the spinor representation of a Lorentzian surface in $\mathbb{H}^{2,1}$ and in $\mathbb{S}^{1,2}$ by means of spinors of $\Sigma M$ only. We examine separately the case of a surface in the Anti-de Sitter space $\mathbb{H}^{2,1},$ and in the pseudo-sphere $\mathbb{S}^{1,2}:$

\subsection{Lorentzian surfaces in $\mathbb{H}^{2,1}$}
We can define a scalar product on $\C^2$ by setting: 
\begin{equation*}
\left\langle \begin{pmatrix}
a+ib \\ c+id
\end{pmatrix},\begin{pmatrix}
a'+ib' \\ c'+id'
\end{pmatrix} \right\rangle:=\frac{ad'+a'd-bc'-b'c}{2};
\end{equation*} it is of signature $(2,2).$ This scalar product is $Spin(1,1)$-invariant and thus induces a scalar product $\la\cdot,\cdot\ra$ on the spinor bundle $\Sigma M.$ It satisfies the following properties: for all $\psi,\psi'\in \Sigma M$ and all $X\in TM,$
\begin{equation}\label{producto escalar H21}
\la\psi,\psi'\ra=\la\psi',\psi\ra\hspace{0.2in}\text{and}\hspace{0.2in}\la X\cdot_M\psi,\psi'\ra=-\la\psi,X\cdot_M\psi'\ra.
\end{equation}
This is the scalar product on $\Sigma M$ that we use in this section (and in this section only). We moreover define $|\psi|^2:=\la\psi,\psi\ra.$  	

\begin{prop}\label{isomorfismo haces 1}
There is an identification \begin{align*}
\Sigma M & \overset{\sim}{\longmapsto}  \Sigma^+_{|M}\\
\psi & \longmapsto  \psi^*
\end{align*} $\C-$linear, and such that, for all $X\in TM$ and all $\psi\in\Sigma M,$ $(\nabla_X\psi)^*=\nabla_X\psi^*,$ 
the Clifford actions are linked by \begin{equation}\label{producto clifforf H21}(X\cdot_M\psi)^*=X\cdot e_1\cdot\psi^* \end{equation} and 
\begin{equation}\label{norma_H21}
H(\psi^*,\psi^*)=\frac{1+\sigma}{2}|\psi|^2. 
\end{equation}
\end{prop}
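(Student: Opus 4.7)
\medskip
\noindent\textbf{Proof plan.} I would argue in a local spinorial frame $\tilde{s}\in\tilde{Q}$ lying above an adapted positively oriented orthonormal frame $(e_0,e_1,e_2,e_3)$, in which $(e_2,e_3)$ is tangent to $M$, $(e_0,e_1)$ is the chosen parallel normal frame, and elements of $\Sigma$ are represented by elements of $\mathbb{H}_0$ while intrinsic spinors of $M$ are represented by $\C^2$ under the standard $Spin(1,1)$-representation. The dimension count already matches: $\Sigma^+=\{\xi\in\mathbb{H}_0:\sigma\xi=\xi\}$ has $\C$-dimension $2$, equal to $\dim_\C\Sigma M$. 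My definition of the map $\psi\mapsto\psi^*$ would use the natural splitting $\Sigma\simeq\Sigma E\otimes\Sigma M$ coming from the orthogonal decomposition $\R^{2,2}=E\oplus TM$: since $E=\R e_0\oplus\R e_1$ is trivialized by parallel sections, I fix once and for all a distinguished unit section $\epsilon\in\Gamma(\Sigma E)$ (with prescribed eigenvalue under the Clifford action of the volume element $e_0\cdot e_1$) and set $\psi^*:=\epsilon\otimes\psi$; choosing the right eigenvector condition on $\epsilon$ ensures $\sigma\cdot\psi^*=\psi^*$, i.e.\ $\psi^*\in\Sigma^+|_M$. The $\C$-linearity is then automatic.

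\medskip
\noindent For the Clifford-action identity, I would use the graded-tensor decomposition $Cl(E\oplus TM)\simeq Cl(E)\,\hat\otimes\, Cl(TM)$ recalled in Section~\ref{trabajo previo}: for $X\in TM$ one has $X\cdot(\epsilon\otimes\psi)=(\omega_E\cdot\epsilon)\otimes(X\cdot_M\psi)$ up to a sign coming from the grading, and multiplying this on the left by $e_1\in E$ gives $e_1\omega_E\cdot\epsilon$ in the first factor. The eigenvector condition chosen for $\epsilon$ makes $e_1\omega_E\cdot\epsilon=\epsilon$, so that $X\cdot e_1\cdot\psi^*=\epsilon\otimes(X\cdot_M\psi)=(X\cdot_M\psi)^*$, which is \eqref{producto clifforf H21}.

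\medskip
\noindent The connection compatibility is then immediate: since $\nabla=\nabla^{\Sigma E}\otimes\mathrm{id}+\mathrm{id}\otimes\nabla^{\Sigma M}$ and $e_0,e_1$ are parallel, $\nabla^{\Sigma E}\epsilon=0$, so $\nabla_X(\epsilon\otimes\psi)=\epsilon\otimes\nabla^{\Sigma M}_X\psi=(\nabla_X\psi)^*$. For the scalar product identity \eqref{norma_H21}, I would use that $H$ is $Spin(2,2)$-invariant and factors, on a tensor product $\epsilon\otimes\psi$, as the product of an $\A_\C$-valued pairing on $\Sigma E$ evaluated at $(\epsilon,\epsilon)$ and the intrinsic pairing $\la\psi,\psi\ra$ of \eqref{producto escalar H21} on $\Sigma M$. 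The computation of $H(\epsilon,\epsilon)$ in the frame, using the explicit representation of the normal Clifford generators as $\sigma i\e$ and $I$, would yield the Lorentz-null constant $\tfrac{1+\sigma}{2}$, giving \eqref{norma_H21}.

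\medskip
\noindent The main technical obstacle is precisely the bookkeeping in the last two steps: choosing the normalization of $\epsilon$ so that all three identities hold simultaneously with the correct signs, and identifying the $\A_\C$-valued pairing on $\Sigma E$ restricted to $\epsilon$ with the null idempotent $\tfrac{1+\sigma}{2}\in\A$. The appearance of $\tfrac{1+\sigma}{2}$ (rather than $1$) is the signature that $\epsilon$ is a \emph{null}-type spinor in $\Sigma E$, an eigenvector of $e_0\cdot e_1$ with eigenvalue $\sigma$; this choice is what both forces $\psi^*$ into $\Sigma^+$ and produces the projector $\tfrac{1+\sigma}{2}$ in the scalar-product formula, and the whole verification ultimately reduces to checking the graded-commutation relations between $\epsilon$, $e_0$, $e_1$ and the volume element, which must be done explicitly in coordinates on $\mathbb{H}_0$.
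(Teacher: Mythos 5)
Your overall strategy (split $\Sigma=\Sigma E\otimes\Sigma M$, trivialize $\Sigma E$ by a parallel section $\epsilon$, and reduce everything to Clifford bookkeeping in a fixed spinorial frame) is not unreasonable, and you should know that the paper gives no proof of this proposition at all: it defers the details to the thesis \cite{VP}, and only indicates the intended route, namely the two-step restriction $\Sigma M\simeq \Sigma\mathcal{Q}(c)_{|M}\simeq \Sigma^+_{|M}$ through the intermediate hypersurface $\mathcal{Q}(-1)=\mathbb{H}^{2,1}$, each step being the standard identification of the spinors of a hypersurface with the (half-)spinors of the ambient space, with Clifford actions twisted by the unit normal. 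Your route is therefore genuinely different; but as written it has a gap at its central point: the definition $\psi^*:=\epsilon\otimes\psi$ for a \emph{single} fixed section $\epsilon\in\Gamma(\Sigma E)$ cannot take values in $\Sigma^+_{|M}$ for all $\psi\in\Sigma M$.

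The obstruction is the chirality correlation inside $\Sigma^+.$ The volume element $e_0\cdot e_1\cdot e_2\cdot e_3$ acts on decomposable spinors essentially as $\omega_E\otimes\omega_M$ with $\omega_E=e_0\cdot e_1$ and $\omega_M=e_2\cdot e_3,$ both squaring to $+1$; consequently $\Sigma^+$ is the sum of (positive $\Sigma E$-chirality)$\,\otimes\,$(positive $\Sigma M$-chirality) and (negative)$\,\otimes\,$(negative), and no subspace of the form $(\C\,\epsilon)\otimes\Sigma M$ is contained in it --- it always meets $\Sigma^-$ in a line. Moreover the two normalizations you want to impose on $\epsilon$ are mutually exclusive: with your form of the graded action, the identity $(X\cdot_M\psi)^*=X\cdot e_1\cdot\psi^*$ forces $\epsilon$ to be an eigenvector of the Clifford multiplication by $e_0$ (since $e_1\cdot\omega_E=\pm e_0$ after using $e_1\cdot e_1=-\la e_1,e_1\ra$), and $e_0,$ being odd in $Cl(E),$ exchanges the two chiralities of $\Sigma E,$ so its eigenvectors necessarily have both chirality components nonzero --- incompatible with the chirality eigenvector condition you invoke to land in $\Sigma^+.$ To repair the argument you must define the map chirality by chirality, e.g. $\psi=\psi^++\psi^-\longmapsto \epsilon^+\otimes\psi^++\epsilon^-\otimes\psi^-$ with $\epsilon^-$ determined from $\epsilon^+$ by Clifford multiplication by a normal vector and normalized so that the $H$-pairing produces the null idempotent $\frac{1+\sigma}{2},$ and then re-verify \eqref{producto clifforf H21} and \eqref{norma_H21} with this corrected map (or follow the two-step hypersurface restriction suggested by the paper, where these sign issues are resolved once per step by the standard formula $X\cdot_M\psi\simeq X\cdot\nu\cdot\psi$).
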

The detailed proof is given in \cite{VP}. Using this identification, the intrinsic Dirac operator on $M$ defined by 
\begin{equation*}
D_M\psi=-e_2\cdot\nabla_{e_2}\psi+e_3\cdot\nabla_{e_3}\psi
\end{equation*} 
is linked to D by 
\begin{equation*}
(D_M\psi)^*=-e_1\cdot D\psi^*.
\end{equation*}
We suppose that $\varphi\in\Gamma(\Sigma)$ is a solution of the equation \eqref{ecuacion de dirac} such that \eqref{inmersion H21} holds (the immersion belongs to $\mathbb{H}^{2,1}$), and we choose $\psi\in\Gamma(\Sigma M)$ such that $\psi^*=\varphi^+$ (note that $\psi\neq 0,$ since $H(\varphi,\varphi)=1$); it satisfies \begin{equation*}
(D_M\psi)^*=-e_1\cdot D\psi^*=-e_1\cdot \vec{H}\cdot\psi^*=-e_1\cdot (e_0+He_1)\cdot \psi^*= H\psi^*+\overline{\psi}^*;
\end{equation*}
since $\varphi$ solves the equation $\nabla_X\varphi=\eta(X)\cdot\varphi,$ where $\eta(X)=-\frac{1}{2}\sum_{j=2}^3\epsilon_je_j\cdot B(X,e_j)$ (Theorem \ref{thm representacion}), the second equality in \eqref{inmersion H21} is equivalent to $[\eta(X)]-\widehat{[\eta(X)]}=[X]\sigma i\e,$ from \eqref{producto escalar H21}, \eqref{producto clifforf H21} and \eqref{norma_H21}  we obtain \begin{equation*}
d(|\psi|^2)(X)=2\la\nabla_X\psi,\psi\ra=4\Re eH(\nabla_X\psi^*,\psi^*)=4\Re e H(\eta(X)\cdot\psi^*,\psi^*)=-\la X\cdot_M\overline{\psi},\psi\ra;
\end{equation*}
since $\la X\cdot_M\overline{\psi},\psi\ra=0$ (by definition of $\la\cdot,\cdot\ra$) $|\psi|^2$ is constant equal to $1,$ and we thus get
\begin{equation}\label{dirac H21}
D_M\psi=H\psi+\overline{\psi}\hspace{0.2in}\mbox{and}\hspace{0.2in} |\psi|^2=1.
\end{equation}

Reciprocally, let $(M,g)$ be a Lorentzian surface and $H:M\to \R$ a given differentiable function, and suppose that $\psi\in \Gamma(\Sigma M)$ satisfies the Dirac equation \eqref{dirac H21}. We define $\varphi^+:=\psi^*\in\Sigma^+$ and $\vec{H}:=e_0+He_1,$ where $e_0,e_1$ are orthogonal and parallel sections of $E$ with $\la e_0,e_0\ra=-\la e_1,e_1\ra=-1,$ and such that $(e_0,e_1)$ is positively oriented. Using \eqref{dirac H21} and \eqref{norma_H21} we obtain
\begin{equation*} 
D\varphi^+=\vec{H}\cdot \varphi^+\hspace{0.2in}\mbox{and}\hspace{0.2in} 
H(\varphi^+,\varphi^+)=\frac{1+\sigma}{2}.
\end{equation*}

\begin{prop}\label{constrction espinor negativo}
Let $\psi\in\Gamma(\Sigma M)$ be a solution of the equation \eqref{dirac H21}. There exists a spinor field $\varphi\in\Gamma(\Sigma)$ solution of 
\begin{equation*}
D\varphi=\vec{H}\cdot\varphi\hspace{0.2in}\mbox{and}\hspace{0.2in}
H(\varphi,\varphi)=1,
\end{equation*}
with $\varphi^+=\psi^*$ and such that the immersion defined by $\varphi$ is given by $F=\laa e_0\cdot\varphi,\varphi\raa.$ In particular $F(M)$ belongs to $\mathbb{H}^{2,1}.$
\end{prop}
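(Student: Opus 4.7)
The plan is to build the full spinor $\varphi = \varphi^+ + \varphi^-$ one parity component at a time, mirroring Proposition \ref{isomorfismo haces 1} for the negative part. The positive component is already fixed: $\varphi^+ := \psi^*$ satisfies $H(\varphi^+,\varphi^+) = \frac{1+\sigma}{2}$ and, by the calculation that immediately precedes the statement, $D\varphi^+ = \vec{H}\cdot\varphi^+$. Since $D$ and Clifford multiplication by $\vec{H} = e_0 + He_1$ both reverse the parity in $\Sigma = \Sigma^+\oplus\Sigma^-$, the extrinsic Dirac equation $D\varphi = \vec{H}\cdot\varphi$ splits into its two parity components. Thus one needs to produce $\varphi^- \in \Gamma(\Sigma^-_{|M})$ satisfying $D\varphi^- = \vec{H}\cdot\varphi^-$ and $H(\varphi^-,\varphi^-) = \frac{1-\sigma}{2}$; then $\varphi := \varphi^+ + \varphi^-$ automatically gives $H(\varphi,\varphi) = 1$, because $H(\varphi^+,\varphi^-) = 0$ follows from the $\AC$-bilinearity of $H$ applied to eigenvectors of $\sigma$ with opposite eigenvalues.

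To construct $\varphi^-$, I would introduce the analog of Proposition \ref{isomorfismo haces 1} for $\Sigma^-_{|M}$: a $\C$-linear isomorphism $\psi\mapsto\psi^\sharp$ from $\Sigma M$ onto $\Sigma^-_{|M}$ chosen so that $(\nabla_X\psi)^\sharp = \nabla_X\psi^\sharp$, so that Clifford multiplication by tangent vectors intertwines with a formula analogous to \eqref{producto clifforf H21}, and so that $H(\psi^\sharp,\psi^\sharp) = \frac{1-\sigma}{2}|\psi|^2$ (the sign is forced because $\sigma$ acts as $-1$ on $\Sigma^-$). Such an identification exists because the bundle $\Sigma E$ already splits as $\Sigma^+_E\oplus\Sigma^-_E$ under the volume element, with the two factors of equal rank. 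I would then set $\varphi^- := \overline{\psi}^\sharp$, the quaternionic conjugation being required so that the Dirac side matches up. To obtain $D\varphi^- = \vec{H}\cdot\varphi^-$, apply $\overline{\cdot}$ to the hypothesized equation \eqref{dirac H21}, yielding $D_M\overline{\psi} = H\overline{\psi} + \psi$ (this uses the reality of $D_M$ with respect to $\overline{\cdot}$ and the fact that $H$ is real-valued), and rerun the derivation that produced $D\varphi^+ = \vec{H}\cdot\varphi^+$, now with $\overline{\psi}$ in place of $\psi$ and the $\sharp$-identification in place of the $*$-identification. By Theorem \ref{thm representacion}, the spinor $\varphi$ so obtained yields an isometric immersion $F_0:M\to\R^{2,2}$, unique up to translation, with normal bundle $E$ and mean curvature $\vec{H}$.

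It remains to show $F_0(M)\subset\mathbb{H}^{2,1}$. By Proposition \ref{carac_inm_isom}(1) it is enough to verify that $F := \laa e_0\cdot\varphi,\varphi\raa$ satisfies $dF(X) = \laa X\cdot\varphi,\varphi\raa$ for all $X\in TM$; combined with $\vec{H} = e_0 + He_1$ and $H(\varphi,\varphi) = 1$ this forces $F_0$ to take values in $\{x:\la x,x\ra=-1\}$. Differentiating $F$ and using $\nabla_X e_0 = 0$ together with the Killing equation \eqref{first killing equation} reduces the identity to a purely algebraic one in the spinor bundle, which follows from the symmetry and Clifford-action properties \eqref{prop_prod_escal_vector} of the $\HI$-valued product. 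The main obstacle I anticipate is setting up the secondary identification $\psi\mapsto\psi^\sharp$ correctly, so that all three ingredients of Proposition \ref{isomorfismo haces 1} (the sign in the $H$-norm, the compatibility with $\nabla$, and the Clifford intertwining) are mirrored faithfully on the negative half; once this identification is in place, each remaining step is a direct transcription of the calculations carried out earlier in the section.
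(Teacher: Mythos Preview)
Your strategy diverges from the paper's, and the divergence matters. The paper does \emph{not} build $\varphi^-$ by a second bundle identification $\Sigma M\to\Sigma^-_{|M}$. Instead it writes the unknown negative part in the form
\[
\varphi^-\;=\;-\,e_0\cdot\bigl(\varphi^+\bullet F_1\bigr),
\]
using the right $\HI$--action $\bullet$ on $\Sigma$, where $F_1:M\to\frac{1+\sigma}{2}\HI$ is an auxiliary function. The two requirements $F_1=\laa e_0\cdot\varphi^-,\varphi^+\raa$ and $dF_1(X)=\laa X\cdot\varphi^-,\varphi^+\raa$ (these are precisely the ``mixed'' pieces of the condition $dF=\xi$ with $F=\laa e_0\cdot\varphi,\varphi\raa$) then become a single linear first--order system $dF_1=\omega\,F_1$ with $\omega(X)=-\sigma i\,\laa X\cdot e_0\cdot\varphi^+,\varphi^+\raa$. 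The integrability condition $d\omega=\omega\wedge\omega$ is checked directly, and on the simply--connected surface $M$ this yields $F_1$; the norm condition $H(\varphi^-,\varphi^-)=\frac{1-\sigma}{2}$ is arranged by normalising the initial value. In other words, the paper \emph{builds the immersion first} (via $F_1$) and reads $\varphi^-$ off from it, so that $F=\laa e_0\cdot\varphi,\varphi\raa$ holds by construction.

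Your last paragraph is where the gap lies. You assert that the identity $d\laa e_0\cdot\varphi,\varphi\raa(X)=\laa X\cdot\varphi,\varphi\raa$ ``follows from the symmetry and Clifford--action properties \eqref{prop_prod_escal_vector}''. It does not. As the paper records just before the proposition, after inserting the Killing equation $\nabla_X\varphi=\eta(X)\cdot\varphi$ this identity is equivalent to
\[
[\eta(X)]-\widehat{[\eta(X)]}\;=\;[X]\,\sigma i\e,
\]
which is a constraint on the second fundamental form (it says the $e_0$--component of $B$ equals the metric $g$), not a universal algebraic fact. Knowing only that $\varphi$ solves $D\varphi=\vec H\cdot\varphi$ with $\vec H=e_0+He_1$ pins down merely the \emph{trace} $\tfrac12\mathrm{tr}_gB_0=1$, not $B_0=g$. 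Consequently, among the many $\varphi^-$ with $D\varphi^-=\vec H\cdot\varphi^-$ and $H(\varphi^-,\varphi^-)=\tfrac{1-\sigma}{2}$ (they differ by right multiplication by suitable constants in $\HI$, and more), only a specific one makes $F$ land in $\mathbb H^{2,1}$; a pointwise ansatz $\varphi^-=\overline{\psi}^{\sharp}$ gives you no handle on that coupling. If you want to salvage your route you would have to show, for your particular $\sharp$, that the resulting $\eta$ satisfies the displayed identity---and that is essentially the content of the ODE the paper solves.
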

\begin{proof}
We need to find $\varphi^-$ solution of the system 
\begin{equation*}
\begin{cases}
F_1=\laa e_0\cdot\varphi^-,\varphi^+\raa \\
dF_1(X)=\laa X\cdot\varphi^-,\varphi^+\raa 
\end{cases}
\end{equation*} with $\la F_1,F_1\ra=-\frac{1}{2};$ this system is equivalent to 
\begin{equation*}
\varphi^-=-e_0\cdot (\varphi^+\bullet F_1)
\end{equation*} with $H(\varphi^-,\varphi^-)=\frac{1-\sigma}{2},$
here $F_1:M \longrightarrow \frac{1+\sigma}{2}\HI$ solves the equation in $\frac{1+\sigma}{2}\HI$ 
\begin{equation}\label{ecuacion omega}
dF_1(X)=\omega(X)F_1
\end{equation}
where \begin{equation*}
\omega(X)=-\sigma i\laa X\cdot e_0\cdot \varphi^+,\varphi^+\raa.
\end{equation*}
By a direct computation, the compatibility equation of  \eqref{ecuacion omega}, 
\begin{equation*}
d\omega(X,Y)=\omega(X)\omega(Y)-\omega(Y)\omega(X),
\end{equation*}
is satisfied, and thus the equation \eqref{ecuacion omega} is solvable. 
\end{proof}

A solution of \eqref{dirac H21} is thus equivalent to an isometric immersion in the Anti-de Sitter space $\mathbb{H}^{2,1}.$ We thus obtain a spinorial characterization of an isometric immersion of a Lorentzian surface in $\mathbb{H}^{2,1},$ which is simpler than the characterization given by M.A. Lawn and J. Roth in \cite{lawn_roth}, where two spinor fields are needed. Finally, this spinorial characterization is similar to the spinor representation of surfaces in the three-dimensional hyperbolic space given by B. Morel in \cite{morel}, and by P. Bayard in \cite{bayard1}.

\begin{remar}Let $M$ be a minimal Lorentzian surface in $\mathbb{H}^{2,1};$ the immersion $M\subset \mathbb{H}^{2,1}$ is represented by a solution $\varphi\in\Gamma(\Sigma)$ of 
\begin{equation}\label{minima en H21}
D\varphi=e_0\cdot\varphi \hspace{0.2in}\mbox{and}\hspace{0.2in} H(\varphi,\varphi)=1.
\end{equation} 
The spinor field $$\tilde{\varphi}:=\varphi^++e_1\cdot(\varphi^+\bullet I)\ \in\ \Sigma$$ satisfies \eqref{R12} and thus
induces an isometric immersion 
$M \hookrightarrow \R^{1,2}$ with constant mean curvature $H\equiv 1;$ we thus get a natural transformation sending a minimal Lorentzian surface in $\mathbb{H}^{2,1}$ to a Lorentzian surface in $\R^{1,2}$ with constant mean curvature $1.$ This is analogous to a classical transformation for surfaces in $S^3,$ described by H.B. Lawson in \cite{lawson2}, by T. Friedrich using spinors in dimension 3 in \cite[Remark 1]{friedrich}, and by P. Bayard, M.A. Lawn and J. Roth  using spinors in dimension 4 in \cite[Remark 4]{bayard_lawn_roth}. 
\end{remar}

\subsection{Lorentzian surfaces in $\mathbb{S}^{1,2}$}
We consider here the following scalar product on $\Sigma M,$ given in coordinates by
\begin{equation*}
\left\langle \begin{pmatrix}
a+ib \\ c+id
\end{pmatrix},\begin{pmatrix}
a'+ib' \\ c'+id'
\end{pmatrix} \right\rangle:=-\frac{ac'+a'c+bd'+b'd}{2};
\end{equation*} it is of signature $(2,2).$ Moreover, for all $\psi,\psi'\in \Sigma M$ and all $X\in TM$ we have:
\begin{equation*}\la\psi,\psi'\ra=\la\psi',\psi\ra\hspace{0.2in}\text{and}\hspace{0.2in}
\la X\cdot_M\psi,\psi'\ra=\la\psi,X\cdot_M\psi'\ra. \end{equation*}
We moreover write $|\psi|^2:=\la\psi,\psi\ra$ and still denote by $i$ the complex structure on $\Sigma$ and on $\Sigma M.$
\begin{prop}\label{isomorfismo haces 2}
There is an identification \begin{align*}
\Sigma M & \overset{\sim}{\longmapsto}  \Sigma^+_{|M}\\
\psi & \longmapsto  \psi^*
\end{align*} $\C-$linear, and such that, for all $X\in TM$ and all $\psi\in\Sigma M,$ $(\nabla_X\psi)^*=\nabla_X\psi^*,$ the Clifford actions are linked by $(X\cdot_M\psi)^*=ie_0\cdot X\cdot\psi^*,$  and 
\begin{equation}\label{norma_S21}H(\psi^*,\psi^*)=-\frac{1+\sigma}{2}|\psi|^2. \end{equation}
\end{prop}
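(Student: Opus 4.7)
The proof of Proposition \ref{isomorfismo haces 2} follows the same pattern as Proposition \ref{isomorfismo haces 1}, but with the roles of $e_0$ and $e_1$ reversed: in the Anti-de Sitter case the unit normal to $\mathcal{Q}(-1)$ in $\R^{2,2}$ is the timelike vector $e_0$ and the remaining normal to $M$ inside $\mathcal{Q}(-1)$ is the spacelike $e_1$, whereas for $\mathcal{Q}(+1)=\mathbb{S}^{1,2}$ the normal to $\mathcal{Q}(+1)$ in $\R^{2,2}$ is the spacelike $e_1$ and the normal to $M$ inside $\mathcal{Q}(+1)$ is the timelike $e_0$. My plan is to mimic the construction of the map $\psi\mapsto\psi^*$ of the previous proposition, replacing the role played by $e_1$ by that of $e_0$, and then to verify the four properties (complex linearity, compatibility with connections, Clifford identity, norm identity) directly in a local spinorial frame.

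More precisely, I would first fix a local positively oriented orthonormal frame $(e_0,e_1,e_2,e_3)$ with $(e_2,e_3)$ tangent to $M$, together with a spinorial lift $\tilde{s}\in\tilde{Q}$. In such a frame the Clifford generators are represented by $\sigma i\e, I, iJ, K\in\mathbb{H}_1$, as recalled at the end of Section \ref{prelim clifford}. A spinor $\psi\in\Sigma M$ with components $[\psi]\in\C^2$ in the underlying intrinsic spinorial frame $\tilde{s}_M$ is sent to the unique $\psi^*\in\Sigma^+_{|M}$ whose components in $\tilde{s}$ are the image of $[\psi]$ under the natural embedding $\C^2\hookrightarrow \Sigma^+\subset\mathbb{H}_0$ determined by tensoring with a distinguished element of $\Sigma E$ associated to the normal frame $(e_0,e_1)$. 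Complex linearity is immediate, and the fact that this definition does not depend on the spinorial frame is checked by verifying equivariance under the action of $Spin'(1,1)\times Spin''(1,1)$.

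Compatibility with the connections follows from the fact that $e_0,e_1$ are parallel, so that the spinorial connection on $\Sigma E$ is trivial in the chosen trivialization, and the connection on $\Sigma=\Sigma E\otimes \Sigma M$ restricts on the image of our embedding to the intrinsic spinorial connection on $\Sigma M$. The Clifford identity $(X\cdot_M\psi)^*=ie_0\cdot X\cdot\psi^*$ is then obtained by computing both sides in $\tilde{s}$: the extrinsic multiplication by $X\in TM$ exchanges $\Sigma^+$ and $\Sigma^-$, and left multiplication by the timelike vector $e_0$ (represented by $\sigma i\e$) is needed to bring the result back into $\Sigma^+$; the factor $i$ absorbs the sign coming from $e_0^2=+\e$ inherent to the timelike convention and matches the complex structure on $\Sigma M$. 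Finally, the norm identity $H(\psi^*,\psi^*)=-\frac{1+\sigma}{2}|\psi|^2$ is a direct coordinate computation using the explicit form of the $\AC$-bilinear map $H$ on $\mathbb{H}_0$ from Section \ref{prelim clifford} together with the new scalar product on $\Sigma M$; the overall minus sign, compared to \eqref{norma_H21}, reflects precisely the change of sign in the definition of $\langle\cdot,\cdot\rangle$ on $\Sigma M$ used in this subsection.

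The main obstacle is the bookkeeping in the Clifford identity in step three: keeping track of the correct factor $i$ and of the correct sign requires care, since these are sensitive both to the timelike versus spacelike character of $e_0$ and to the conventions chosen for the complex structure on $\mathbb{H}_0$ (multiplication by $J$ on the right) and for the embedding $\C^2\hookrightarrow \mathbb{H}_0$. Once this is settled, the other verifications are routine and parallel the arguments given for Proposition \ref{isomorfismo haces 1}.
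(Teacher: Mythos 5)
Your strategy is the natural one and, as far as one can tell, coincides with the paper's: the paper gives no proof of this proposition in the text (it defers to \cite{VP}), and the intended argument is exactly what you outline --- trivialize $\Sigma E$ by the parallel frame $(e_0,e_1)$, embed $\Sigma M$ into $\Sigma^+_{|M}$ by tensoring with a distinguished unit element of $\Sigma E$, and verify everything in a parallel spinorial frame. Your structural remarks are also sound; in particular the factor $i$ in $(X\cdot_M\psi)^*=ie_0\cdot X\cdot\psi^*$ is forced by the Clifford relations, since $(ie_0\cdot X\cdot)(ie_0\cdot X\cdot)=-e_0\cdot X\cdot e_0\cdot X\cdot=e_0\cdot e_0\cdot X\cdot X\cdot=-\la X,X\ra$ (using $X\perp e_0$ and $e_0\cdot e_0=+\e$ for the timelike normal), which matches $(X\cdot_M)^2=-\la X,X\ra$.

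That said, what you have written is a plan rather than a proof, and the step you defer --- which you yourself flag as the ``main obstacle'' --- is precisely the content of the proposition. You never specify the embedding $\C^2\hookrightarrow\Sigma^+\subset\HO$ concretely, so neither the Clifford identity nor the norm identity is actually verified. To close the argument you must: (i) fix the embedding explicitly in a frame $\tilde s$ with $\pi(\tilde s)=(e_0,e_1,e_2,e_3)$, i.e.\ say which elements of $\Sigma^+=\frac{1+\sigma}{2}\HO$ correspond to the standard basis of $\C^2$; (ii) compute $ie_0\cdot X\cdot\psi^*$ for $X=e_2,e_3$ using $[Y\cdot\varphi]=\sigma i\,[Y]\widehat{[\varphi]}$ with $[e_0]=\sigma i\e$, $[e_2]=iJ$, $[e_3]=K$, and match it with the intrinsic action on $\C^2$; and (iii) check \eqref{norma_S21} against the scalar product $-\tfrac{1}{2}(ac'+a'c+bd'+b'd)$ of this subsection. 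On point (iii) your explanation is too quick: this scalar product is \emph{not} simply the negative of the one used in the $\mathbb{H}^{2,1}$ subsection (that one is $\tfrac{1}{2}(ad'+a'd-bc'-b'c)$), so the minus sign in \eqref{norma_S21} relative to \eqref{norma_H21} does not follow from ``a change of sign in the definition''; the identification $\psi\mapsto\psi^*$ must itself be chosen (or re-verified) so that the quadratic form $H$ restricted to the image pulls back to this particular bilinear form, and that is a computation, not a formality.
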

The detailed proof is given in \cite{VP}. Using this identification, we have
\begin{equation*}
(D_M\psi)^*=i e_0\cdot D\psi^*
\end{equation*} 
for all $\psi\in\Sigma M.$ If we suppose that $\varphi$ is a solution of \eqref{ecuacion de dirac}, we can choose $\psi\neq 0\in \Sigma M$ such that $\psi^*=\varphi^+;$ moreover, if \eqref{inmersion S21} holds, $\psi$ satisfies 
\begin{equation*}
(D_M\psi)^*=i\ e_0\cdot \vec{H}\cdot\psi^*=i\ e_0\cdot (-He_0+e_1)\cdot \psi^*= -i\ H\psi^*+i\ \overline{\psi}^*,
\end{equation*} 
and, using \eqref{inmersion S21} and \eqref{norma_S21} we deduce
\begin{equation}\label{dirac S21}
D_M\psi=-i\ H\psi+i\ \overline{\psi}\hspace{0.2in}\mbox{and}\hspace{0.2in} |\psi|^2=-1.
\end{equation} 

Reciprocally, let $(M,g)$ be a Lorentzian surface and $H:M\to \R$ a given differentiable function, and suppose that $\psi\in \Gamma(\Sigma M)$ satisfies \eqref{dirac S21}. We define $\vec{H}:=-He_0+e_1,$ $$\varphi^+:=\psi^* \hspace{0.2in}\mbox{and}\hspace{0.2in} \varphi^-:=-e_1\cdot (\varphi^+\bullet F_1)$$ where $e_0,e_1$ are orthogonal and parallel sections of $E$ with $\la e_0,e_0\ra=-\la e_1,e_1\ra=-1,$ such that $(e_0,e_1)$ is positively oriented, and where (with a construction analogous to the construction in Proposition \ref{constrction espinor negativo}) $F_1$ solves the equation 
\begin{equation*}
dF_1(X)=\omega(X)F_1\hspace{0.2in}\mbox{with}\hspace{0.2in}
\omega(X)=-\sigma i\laa X\cdot e_1\cdot \varphi^+,\varphi^+\raa.
\end{equation*}
The spinor field $\varphi:=\varphi^++\varphi^-\in\Sigma$ satisfies the equation \eqref{ecuacion de dirac} and the isometric immersion $F$ induced by $\varphi$ is given by $$F=F_1-\widehat{\overline{F}}_1=\laa-e_1\cdot\varphi,\varphi\raa \ \subset\ \mathbb{S}^{1,2}.$$ 

A solution of \eqref{dirac S21} is thus equivalent to an isometric immersion of a Lorentzian surface in $\mathbb{S}^{1,2}.$ Here again, we obtain a spinor characterization of an isometric immersion of a Lorentzian surface in the pseudo-sphere $\mathbb{S}^{1,2},$ which is simpler than the characterization obtained by M.A. Lawn and J. Roth in \cite{lawn_roth} where two spinor fields are involved.

\begin{remar}Let $M$ be a minimal Lorentzian surface in $\mathbb{S}^{1,2},$ the immersion $M\subset \mathbb{S}^{1,2}$ is represented by a solution $\varphi\in\Gamma(\Sigma)$ of
\begin{equation}\label{minima S12}
D\varphi=e_1\cdot\varphi,\hspace{0.3in} H(\varphi,\varphi)=1.
\end{equation} The spinor field $$\tilde{\varphi}:=\varphi^++e_0\cdot\varphi^+\ \in\ \Sigma$$ satisfies \eqref{R21} and thus induces an isometric immersion $M \hookrightarrow \R^{2,1}$ with constant mean curvature $H\equiv 1.$ We thus get a natural transformation sending a minimal Lorentzian surface in $\mathbb{S}^{1,2}$ to a Lorentzian surface in $\R^{2,1}$ with constant mean curvature $1.$ 
\end{remar}

\section{Flat Lorentzian surfaces in pseudo-spheres of $\R^{2,2}$}\label{flat pseudo esferas}
In this section, we obtain the conformal description of a flat Lorentzian surface in the Anti-de Sitter space $\mathbb{H}^{2,1},$ and in the pseudo-sphere $\mathbb{S}^{1,2}$ (proof of Theorem \ref{superficies en H12}). This conformal description extends to the Lorentzian case the representation of the flat surfaces in the three-dimensional hyperbolic and de Sitter spaces given by J.A. G\'alvez, A. Mart\'inez and F. Mil\'an in \cite{gmm1,gmm2}. We then obtain the local description of a flat Lorentzian surface in $\mathbb{H}^{2,1}$ (resp. in $\mathbb{S}^{1,2}$) as a product of two curves in $\mathbb{H}^{2,1}$ (resp. in $\mathbb{S}^{1,2}$); this description extends the representation of the flat surfaces in $\mathbb{S}^3$ as a product of two curves given by Bianchi (see \cite{spivak}). 

Keeping the notation of Section \ref{prelim}, we consider the isomorphism of algebras \begin{equation*}\label{isomorfismo pares}\begin{split}A_0:\hspace{0.1in}\HO \hspace{0.5in}& \longrightarrow  \hspace{0.3in}M_2(\A) \\ p=p_0\e+ip_1I+p_2J+ip_3K & \longmapsto \begin{pmatrix} p_0-\sigma p_1 & p_2-\sigma p_3 \\ -p_2-\sigma p_3 & p_0+\sigma p_1 \end{pmatrix};\end{split}
\end{equation*} 
it is such that
\begin{equation}\label{isomorfismo pares 1}
H(p,p)=\det A_0(p)\hspace{0.2in}\mbox{and}\hspace{0.2in} A_0\left(\widehat{\overline{p}}\right)=A_0(p)^*
\end{equation} 
for all $p\in\HO.$ We also consider the isomorphism of vector spaces
\begin{equation}\label{isomorfismo impares}\begin{split}A_1:\hspace{0.1in}\HI \hspace{0.5in}& \longrightarrow  \hspace{0.3in}M_2(\A) \\ q=iq_0\e+q_1I+iq_2J+q_3K & \longmapsto \begin{pmatrix} -q_1-\sigma q_0 & -q_3-\sigma q_2 \\ -q_3+\sigma q_2 & q_1-\sigma q_0 \end{pmatrix};\end{split}
\end{equation} 
it satisfies \begin{equation}\label{isomorfismo impares 1}H(q,q)=-\det A_1(q)\hspace{0.2in}\mbox{and}\hspace{0.2in} A_1\left(\widehat{\overline{q}}\right)=-A_1(q)^*\end{equation} 
for all $q\in\HI.$ By a direct computation, for all $p,p'\in \HO$ we have
\begin{equation}\label{relacion H12}
A_1(\sigma i\e\ p\ p')=-A_0(p)A_0(p')\hspace{0.2in}\mbox{and}\hspace{0.2in}
A_1(p\ I\ p')=A_0(p)\begin{pmatrix}
-1 & 0\\0 & 1 \end{pmatrix} A_0(p').
\end{equation}
Using \eqref{isomorfismo impares} and \eqref{isomorfismo impares 1}, we get 
\begin{equation}\label{hermitian R22}
\R^{2,2}=\left\lbrace \xi\in\mathbb{H}_1\mid \widehat{\overline{\xi}}=-\xi \right\rbrace
\simeq Herm_2(\A),
\end{equation} where the metric $\la\cdot,\cdot\ra$ of $\R^{2,2}$ identifies with $-\det$ defined on $Herm_2(\A).$  Moreover, the Anti-de Sitter space (defined in \eqref{anti de sitter}) is described by
\begin{equation*}\label{descripcion H12}
\mathbb{H}^{2,1}\simeq\left\lbrace BB^*\mid B\in Sl_2(\A)\right\rbrace \ \subset\ Herm_2(\A)
\end{equation*}
and the pseudo-sphere (defined in \eqref{pseudo esfera}) by 
\begin{equation*}\label{descripcion S21}
\mathbb{S}^{1,2}\simeq\left\lbrace B\begin{pmatrix}
-1 & 0\\0 & 1
\end{pmatrix}B^*\mid B\in Sl_2(\A)\right\rbrace \ \subset\ Herm_2(\A).
\end{equation*} 
Indeed, from \eqref{anti de sitter} and \eqref{hermitian R22} we have $\mathbb{H}^{2,1}\simeq \{C\in Herm_2(\A)\mid \det C=1\},$ and thus $$C\in \mathbb{H}^{2,1}\hspace{0.3in}\mbox{iff}\hspace{0.3in} C=\frac{1+\sigma}{2}C_1+\frac{1-\sigma}{2}C_1^t,$$ where $C_1\in Sl_2(\R);$ setting $B:=\frac{1+\sigma}{2}C_1+\frac{1-\sigma}{2}\begin{pmatrix}1 & 0\\ 0 & 1 \end{pmatrix},$ we get $C=BB^*$ and $B\in Sl_2(\A).$ The argument for the case of the pseudo-sphere $\mathbb{S}^{1,2}$ is analogous.

We consider $(M,g)$ a simply-connected Lorentzian surface and suppose that the vector bundles $TM$ and $E$ are flat; with the notation of Section \ref{trabajo previo}, the spinorial connection on the bundle $\tilde{Q}$ is flat, and $\tilde{Q}$ admits a parallel local section $\tilde{s};$ since $M$ is simply connected, the section $\tilde{s}$ is in fact globally defined. We consider $\varphi\in\Gamma(\Sigma)$ as in the second statement of Theorem \ref{thm representacion}, and set $[\varphi]:M\to Spin(2,2)$ the coordinates of $\varphi$ in $\tilde{s}:$ the equation \eqref{first killing equation} reads 
\begin{equation}\label{killing coordenadas}
d[\varphi]=[\eta][\varphi],
\end{equation} where $\eta(\cdot)=-\frac{1}{2}\sum_{j=2}^3\epsilon_je_j\cdot B(\cdot,e_j)$ is such that $[\eta]\in \A J\oplus i\A K\subset \HO.$ We moreover assume that the Gauss map $$G:M \rightarrow \mathcal{Q}:=\{ u_1\cdot u_2 \mid u_1,u_2\in \R^{2,2}, -|u_1|^2=|u_2|^2=1 \}\subset Cl_0(2,2)\simeq \HO$$ ($\mathcal{Q}$ identifies to the Grassmannian of the oriented Lorentzian planes in $\R^{2,2}$) of the immersion defined by $\varphi$ is regular; since $TM$ and $E$ are flat, for all $x\in M,$ $dG_x(T_xM)\subset \HO$ is stable by multiplication by $\sigma\e\in\HO,$  and we thus define the unique Lorentz structure $\sigma$ on $M$ given by $$\sigma\ u:=dG_x^{-1}(\sigma\  dG_x(u)),\hspace{0.2in}\forall u\in TM.$$ This Lorentz structure on $M$ is such that $G$ is a conformal map: the multiplication by $\sigma\e$ on $\HO$ induces a natural Lorentz structure on $\HO$ and therefore on $\mathcal{Q},$ and on $Spin(2,2).$ We thus get that $[\varphi]:M\to Spin(2,2)$ is in fact a conformal map; see details in \cite[Section 3]{BP}. 

\subsection{Proof of Theorem \ref{superficies en H12}}
The proof of the direct statement is obtained easily: in the first case the fact that $F=BB^*$ defines a flat immersion in $\mathbb{H}^{2,1}$ may be proved by a direct computation; the induced metric and the shape operator are given by 
\begin{equation*}
g=\left(\theta+\overline{\omega}\right)\left(\omega+\overline{\theta}\right) \hspace{0.2in}\mbox{and}\hspace{0.2in} S= B\begin{pmatrix} 0 & \theta-\overline{\omega}\\ -\omega+\overline{\theta} & 0\end{pmatrix}B^*, 
\end{equation*}
thus the Gauss equation (see \cite[pag. 107]{oneill}) implies the result. The proof in the case of $\mathbb{S}^{1,2}$ is analogous. 

Reciprocally, we suppose that there exists a flat isometric immersion $F:(M,g)\longrightarrow \mathbb{H}^{2,1}$ (resp. $\mathbb{S}^{1,2}$). Using the natural isometric embedding $\mathbb{H}^{2,1}\hookrightarrow \R^{2,2}$ (resp. $\mathbb{S}^{1,2}\hookrightarrow \R^{2,2}$), we get a flat immersion $M\hookrightarrow \R^{2,2}$ with flat normal bundle and regular Gauss map, and we can consider the Lorentz structure on $M$ such that the Gauss map is conformal. We denote by $E$ its normal bundle, $\vec{H}\in\Gamma(E)$ its mean curvature vector field and $\Sigma:=M\times \HO$ the spinor bundle of $\R^{2,2}$ restricted to $M.$ The immersion $F$ is given by \begin{equation*}
F=\int \xi,\hspace{0.3in}\mbox{where} \hspace{0.2in}\xi(X)=\laa X\cdot\varphi,\varphi \raa,
\end{equation*} for some spinor field $\varphi\in\Gamma(\Sigma)$ solution of $D\varphi=\vec{H}\cdot\varphi$ and such that $H(\varphi,\varphi)=1$ (the spinor field $\varphi$ is the restriction to $M$ of the constant spinor field $\sigma\e$ or $-\sigma\e\in\HO$). We examine separately the case of a Lorentzian surface in the Anti-de Sitter space $\mathbb{H}^{2,1},$ and in the pseudo-sphere $\mathbb{S}^{1,2}:$
\paragraph{Flat Lorentzian surfaces in $\mathbb{H}^{2,1}.$} 
In this case, using Proposition \ref{carac_inm_isom}, {\it 1.} we have 
\begin{equation}\label{inmersion H12}
F=\laa e_0\cdot\varphi,\varphi\raa,
\end{equation}
where $e_0\in\Gamma(E)$ is the future-directed vector which is normal to $\mathbb{H}^{2,1}$ in $\R^{2,2}.$ We choose a parallel frame $\tilde{s}\in\Gamma(\tilde{Q})$ adapted to $e_0,$ i.e. such that $e_0$ is the first vector of  $\pi(\tilde{s})\in\Gamma(Q_1\times_M Q_2):$ in $\tilde{s},$ using \eqref{relacion H12}, \eqref{inmersion H12} reads 
\begin{equation}\label{inmersion H12 1}
F=-\sigma i \overline{[\varphi]}\widehat{[\varphi]}
\simeq - A_1(\sigma i\e \overline{[\varphi]}\ \widehat{[\varphi]} )
=A_0(\overline{[\varphi]})A_0(\widehat{[\varphi]})
\end{equation}
where $[\varphi]\in\HO$ represents the spinor field $\varphi$ in $\tilde{s}.$ Thus, setting $B:=A_0(\overline{[\varphi]})$ and using \eqref{isomorfismo pares 1} we have that $B$ belongs to $Sl_2(\A)$ (since $H(\varphi,\varphi)=1$) and $B^*=A_0(\widehat{[\varphi]}).$ From \eqref{inmersion H12 1} we thus get $F\simeq BB^*.$ Using \eqref{killing coordenadas} we finally obtain \begin{align*}
B^{-1}dB &=A_0([\varphi]\ d\overline{[\varphi]})=-A_0(d[\varphi]\ \overline{[\varphi]})\\ &=-A_0(\eta_1 J+i\eta_2 K)
=\begin{pmatrix}
0 & -\eta_1+\sigma\eta_2 \\ \eta_1+\sigma \eta_2 & 0
\end{pmatrix},
\end{align*} where $\eta_1$ and $\eta_2$ are $1-$forms on $M$ with values in $\A.$ With respect to the Lorentz structure induced on $M$ (by the Gauss map), $B:M\longrightarrow Sl_2(\A)$ is a conformal map (since $[\varphi]:M\longrightarrow Spin(2,2)\subset \HO$ is a conformal map and $A_0$ is $\A-$linear). 
Remark \ref{derivada conforme} implies that $\theta:=-\eta_1+\sigma\eta_2$ and $\omega:=\eta_1+\sigma \eta_2$ are  conformal $1-$forms, and, $dF$ injective reads $|\theta|^2\neq |\omega|^2.$ 

\paragraph{Flat Lorentzian surfaces in $\mathbb{S}^{1,2}.$}
In this case, the immersion is given by \begin{equation}\label{inmersion S12}
F=\laa -e_1\cdot\varphi,\varphi\raa,
\end{equation}
where $e_1\in\Gamma(E)$ is a normal vector to $\mathbb{S}^{1,2}$ (see Proposition \ref{carac_inm_isom}, {\it 2.}). We choose a parallel frame $\tilde{s}\in\Gamma(\tilde{Q})$ adapted to $e_1,$ i.e. such that $e_1$ is the second vector of $\pi(\tilde{s})\in\Gamma(Q_1\times_M Q_2):$ in $\tilde{s},$ using \eqref{relacion H12}, \eqref{inmersion S12} reads 
\begin{equation}\label{inmersion S12 1}
F=\overline{[\varphi]}I\widehat{[\varphi]}
\simeq  A_1(\overline{[\varphi]}I\widehat{[\varphi]} )
=A_0(\overline{[\varphi]})\begin{pmatrix}-1 & 0 \\0 & 1\end{pmatrix}A_0(\widehat{[\varphi]})
\end{equation}
where $[\varphi]\in\HO$ represents $\varphi$ in $\tilde{s}.$ Setting $B:=A_0(\overline{[\varphi]})$ as above, we have $B\in Sl_2(\A)$ and $B^*=A_0(\widehat{[\varphi]});$ from \eqref{inmersion S12 1} we thus get  $F\simeq B\begin{pmatrix}-1 & 0\\0 & 1\end{pmatrix}B^*.$ In this case,
$dF$ injective reads $|\theta|^2\neq -|\omega|^2.$ 

\subsection{Flat Lorentzian surfaces as a product of curves}\label{product of curves}
As a consequence of Theorem \ref{superficies en H12}, we obtain easily the local description of a flat Lorentzian surface in the Anti-de Sitter space $\mathbb{H}^{2,1}$ or in the pseudo-sphere $\mathbb{S}^{1,2}$ as a product of two curves.

We note that every matrix in $Herm_2(\A)$ can be written as $\frac{1+\sigma}{2}C+\frac{1-\sigma}{2}C^t$ with $C\in M_2(\R),$ and thus, we can identify $Herm_2(\A)\simeq M_2(\R);$ under this identification we have $\mathbb{H}^{2,1}\simeq Sl_2(\R).$
 
\begin{coro}A flat Lorentzian surface in $\mathbb{H}^{2,1}$ (resp. in $\mathbb{S}^{1,2}$) may be written (locally) as a product of two curves in $\mathbb{H}^{2,1}$ (resp. in $\mathbb{S}^{1,2}$).
\end{coro}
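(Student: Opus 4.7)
The plan is to exploit the idempotent decomposition $\mathcal{A} = \frac{1+\sigma}{2}\mathbb{R} \oplus \frac{1-\sigma}{2}\mathbb{R}$ and the resulting splitting $Sl_2(\mathcal{A}) \simeq Sl_2(\mathbb{R}) \times Sl_2(\mathbb{R})$. Starting from Theorem~\ref{superficies en H12}, a flat Lorentzian surface in $\mathbb{H}^{2,1}$ (resp.\ $\mathbb{S}^{1,2}$) is given by $F = BB^{*}$ (resp.\ $F = B\bigl(\begin{smallmatrix}-1 & 0\\ 0 & 1\end{smallmatrix}\bigr)B^{*}$) for some conformal $B:M \to Sl_2(\mathcal{A})$ whose logarithmic derivative $B^{-1}dB$ is off-diagonal with conformal $\mathcal{A}$-valued entries $\theta,\omega$. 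In the characteristic coordinates $a = \frac{1+\sigma}{2}s + \frac{1-\sigma}{2}t$, I would write $B = \frac{1+\sigma}{2}B_{+} + \frac{1-\sigma}{2}B_{-}$ with $B_{\pm}:M \to Sl_2(\mathbb{R})$.

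The key observation is that a conformal $\mathcal{A}$-valued 1-form $\alpha$ decomposes as $\alpha = \frac{1+\sigma}{2}\alpha_{+}(s)\,ds + \frac{1-\sigma}{2}\alpha_{-}(t)\,dt$ with $\alpha_{+}$ depending only on $s$ and $\alpha_{-}$ only on $t$; this is just the characterization of the condition $\partial_{\widehat{a}}=0$ via the idempotents, reviewed in the appendix. Applied to $\theta$ and $\omega$, the structure equation for $B^{-1}dB$ splits into two decoupled linear systems, one in $s$ only for $B_{+}$ and one in $t$ only for $B_{-}$. Therefore $B_{+}=B_{+}(s)$ and $B_{-}=B_{-}(t)$ are genuine curves in $Sl_2(\mathbb{R})$.

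A short computation gives $BB^{*} = \frac{1+\sigma}{2}B_{+}(s)B_{-}(t)^{t} + \frac{1-\sigma}{2}B_{-}(t)B_{+}(s)^{t}$, and the identifications $Herm_2(\mathcal{A}) \simeq M_2(\mathbb{R})$ and $\mathbb{H}^{2,1} \simeq Sl_2(\mathbb{R})$ recalled at the start of this section translate this into $F(s,t) \simeq B_{+}(s)\cdot B_{-}(t)^{t}$, exhibiting the surface as a product of two curves in $\mathbb{H}^{2,1}$. The same template applied to $F = B\bigl(\begin{smallmatrix}-1 & 0\\ 0 & 1\end{smallmatrix}\bigr)B^{*}$ yields $F(s,t) \simeq B_{+}(s)\cdot\bigl(\begin{smallmatrix}-1 & 0\\ 0 & 1\end{smallmatrix}\bigr)\cdot B_{-}(t)^{t}$, again a product of two curves in $\mathbb{S}^{1,2}$ once $\bigl(\begin{smallmatrix}-1 & 0\\ 0 & 1\end{smallmatrix}\bigr)$ is absorbed into one of the factors. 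The only real obstacle I anticipate is bookkeeping: tracking the transpose conventions so that $BB^{*}$ indeed assumes the form $\frac{1+\sigma}{2}C + \frac{1-\sigma}{2}C^{t}$ with $C = B_{+}(s)B_{-}(t)^{t}$, and checking that the conformality of $\theta,\omega$ genuinely produces the clean $(s,t)$-decoupling of the structure equation; no new analytic ingredient beyond the idempotent formalism already developed in the paper should be required.
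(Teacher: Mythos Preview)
Your proposal is correct and follows essentially the same approach as the paper: both use the idempotent decomposition of $\mathcal{A}$ to split $B$ into $Sl_2(\mathbb{R})$-components depending only on $s$ and $t$ respectively, then compute $BB^*$ and invoke the identification $Herm_2(\mathcal{A})\simeq M_2(\mathbb{R})$, $\mathbb{H}^{2,1}\simeq Sl_2(\mathbb{R})$ to read off the product $B_+(s)B_-(t)^t$. The only minor difference is that the paper obtains $B_\pm=B_\pm(s),B_\pm(t)$ directly from the conformality of $B$ itself (via the appendix characterization of conformal maps), whereas you deduce it by decoupling the structure equation $B^{-1}dB$ using the conformality of $\theta,\omega$; these are equivalent and your route is slightly more explicit.
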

\begin{proof}We prove only the case of a Lorentzian surface in $\mathbb{H}^{2,1}.$ In the coordinates $(s,t)$ defined in \eqref{coordenadas st}, the conformal immersion $B:\mathcal{U}\subset M \to Sl_2(\A)$ of Theorem \ref{superficies en H12} is given by
\begin{equation*}
B(s,t)=\frac{1+\sigma}{2}B_1(s)+\frac{1-\sigma}{2}B_2(t),
\end{equation*}
where $B_1,B_2\in Sl_2(\R);$ 
identifying \begin{equation*}
B_1(s)\simeq \frac{1+\sigma}{2}B_1(s)+\frac{1-\sigma}{2}B_1(s)^t \hspace{0.2in}\mbox{and}\hspace{0.2in} 
B_2(t)\simeq \frac{1+\sigma}{2}B_2(t)+\frac{1-\sigma}{2}B_2(t)^t \ \in\ \mathbb{H}^{2,1},
\end{equation*}
we get two curves $B_1(s), B_2(t)$ in $\mathbb{H}^{2,1}$ such that  
the immersion is described by $$F=BB^* = \frac{1+\sigma}{2}B_1(s)B_2(t)^t+\frac{1-\sigma}{2}B_2(t)B_1(s)^t\simeq B_1(s)B_2(t)^t,$$ the immersion is thus a product of two curves in $\mathbb{H}^{2,1}.$
\end{proof}

\appendix
\section{On Lorentz surfaces}\label{lorentz appendix}
A Lorentz surface is a surface $M$ together with a covering by open subsets $M=\cup_{\alpha\in S}U_{\alpha}$ and charts
$$\varphi_{\alpha}:\hspace{.3cm}U_{\alpha}\ \rightarrow\ \mathcal{A},\hspace{.5cm} \alpha\in S$$ 
such that the transition functions
$$\varphi_{\beta}\circ\varphi_{\alpha}^{-1}:\hspace{.3cm}\varphi_{\alpha}(U_\alpha\cap U_\beta)\subset\mathcal{A}\ \rightarrow\ \varphi_{\beta}(U_\alpha\cap U_\beta)\subset\mathcal{A},\hspace{.5cm}\alpha,\ \beta\in S$$
are conformal maps in the following sense: for all $a\in\varphi_{\alpha}(U_\alpha\cap U_\beta)$ and $h\in\mathcal{A},$
$$d\ (\varphi_{\beta}\circ\varphi_{\alpha}^{-1})_a\ (\sigma\ h)\hspace{.3cm}=\hspace{.3cm}\sigma\ d\ (\varphi_{\beta}\circ\varphi_{\alpha}^{-1})_a\ (h).$$
A Lorentz structure is also equivalent to a smooth family of maps
$$\sigma_p:\hspace{.3cm}T_pM\ \rightarrow\ T_pM,\hspace{.5cm} \mbox{with}\hspace{.5cm} \sigma_p^2=Id_{T_pM},\ \sigma_p\neq\pm Id_{T_pM}.$$
This definition coincides with the definition given in \cite{Weinstein}: a Lorentz structure is equivalent to a conformal class of Lorentzian metrics on the surface, that is to a smooth family of cones in every tangent space of the surface, with distinguished lines. Indeed, the cone in $p\in M$ is
$$Ker(\sigma_p-Id_{T_pM})\ \cup\ Ker(\sigma_p+Id_{T_pM})$$
where the sign of the eigenvalues $\pm 1$ permits to distinguish one of the lines from the other.

If $M$ is moreover oriented, we will say that the Lorentz structure is compatible with the orientation of $M$ if the charts $\varphi_\alpha: U_{\alpha}\rightarrow\mathcal{A},\ \alpha\in S$ preserve the orientations (the positive orientation in $\mathcal{A}=\{u+\sigma v\mid\ u,v\in\R\}$ is naturally given by $(\partial_u,\partial_v$)). In that case, the transition functions are conformal maps $\A\to\A$ preserving orientation.

\paragraph{Conformal maps on Lorentz surfaces.}
If $M$ is a Lorentz surface, a smooth map $\psi:M\rightarrow \mathcal{A}$ (or $\mathcal{A}^n,$ or a Lorentz surface) will be said to be a conformal map if $d\psi$ preserves Lorentz structures, that is if
$$d\psi_p(\sigma_pX)\ =\ \sigma_{\psi(p)}(d\psi_p(X))$$
for all $p\in M$ and $X\in T_pM.$ In a chart $a:=u+\sigma v:U\subset\mathcal{A}\rightarrow M,$ a conformal map satisfies 
\begin{equation}\label{eqn crl}
\partial_v\psi =\ \sigma\ \partial_u\psi.
\end{equation}
Writing \begin{equation*}\label{campos conformes} \partial_a:=\frac{1}{2}\left(\partial_u+\sigma \partial_v \right),\hspace{0.3in} \partial_{\widehat{a}}:=\frac{1}{2}\left(\partial_u-\sigma \partial_v \right),\end{equation*}  and $da:=du+\sigma dv$ and $d\widehat{a}:=du-\sigma dv,$ the differential $d\psi$ of a smooth map $\psi:M \longrightarrow \A$ can be written as $$d\psi=\partial_a\psi\ da+\partial_{\widehat{a}}\psi\ d\widehat{a},$$ thus, the condition $\psi$ conformal is equivalent by \eqref{eqn crl} to $\partial_{\widehat{a}}\psi=0;$ hence, we have $d\psi\ =\ \psi' da,$ where $\psi':=\partial_a\psi=\partial_u\psi:M \to\A$ is a smooth map.

Defining the coordinates $(s,t)$ such that
\begin{equation}\label{coordenadas st}
u+\sigma\ v\ =\ \frac{1+\sigma}{2}\ s+\frac{1-\sigma}{2}\ t
\end{equation}
($s$ and $t$ are parameters along the distinguished lines) and writing
$$\psi\ =\ \frac{1+\sigma}{2}\ \psi_1+\frac{1-\sigma}{2}\ \psi_2$$
with $\psi_1,\psi_2\in\R,$ (\ref{eqn crl}) reads
$$\partial_t\psi_1=\partial_s\psi_2=0,$$
and we get
$$\psi_1=\psi_1(s)\hspace{1cm}\mbox{and}\hspace{1cm} \psi_2=\psi_2(t);$$
a conformal map is thus equivalent to two functions of one variable.

\paragraph{Conformal $1-$forms on Lorentz surfaces.}
If $M$ is a Lorentz surface, a smooth $1-$form $\omega:TM \to \A$ can be written (in a chart $a=u+\sigma v:U\subset\mathcal{A}\rightarrow M$) as $$\omega=P\ du+Q\ dv,$$ where $P,Q:M\rightarrow \A$ are smooth maps. If we suppose that $\omega$ preserves the Lorentz structure, i.e. $$\omega(\sigma\ X)=\sigma\ \omega(X)$$ for all $X\in TM,$ we have $Q=\sigma P$ and 
$\omega=P\ (du+\sigma dv)=P\ da.$

We shall say that a $1-$form $\omega=Pda$ is a conformal $1-$form if $P:M \to \A$ is a conformal map. We note that a conformal $1-$form is the analogous to a holomorphic $1-$form in complex analysis and we obtain by a direct computation the following classical theorem of integration: let $f:\mathcal{U}\subset \A \to \A$ be a smooth map, the exterior differential of the $1-$form $fda$ satisfies $$d(fda)=\partial_{\widehat{a}}f\ d\widehat{a} \wedge da$$ and $f$ is a conformal map if and only if $fda$ is a closed $1-$form.
%
  
\begin{remar}\label{derivada conforme}If $\psi:M \to \A$ is a conformal map, its differential $d\psi=\psi'da$ is a conformal $1-$form: indeed we have $$\partial_{\widehat{a}}\psi'=\partial_{\widehat{a}} (\partial_{a}\psi)=\partial_a(\partial_{\widehat{a}}\psi)=0,$$
i.e. $\psi':M \to \A$ is a conformal map. 
\end{remar}

\noindent\textbf{Acknowledgements:} This work is part of the author's PhD thesis. The author is very grateful to Pierre Bayard for suggestions and comments; the author also thanks CONACYT for support.


\begin{thebibliography}{99}
\bibitem{bayard1}P. Bayard, \emph{On the spinorial representation of spacelike surfaces into 4-dimensional Minkowski space,} J. Geom. Phys. \textbf{74} (2013), 289-313.
\bibitem{BP} P. Bayard and V. Patty, \emph{Spinor representation of Lorentzian surfaces in $\R^{2,2}$,} J. Geom. Phys. \textbf{95} (2015), 74-95.
\bibitem{bayard_lawn_roth}P. Bayard, M.-A. Lawn and J. Roth, \emph{Spinorial representation of surfaces into 4-dimensional space forms,} Ann. Glob. Anal. Geom. \textbf{44:4} (2013), 433-453.
\bibitem{dussan} M.P. Dussan and M. Magid, \emph{The Bj\"{o}rling problem for timelike surfaces in $\R^4_2
$,} J. Geom. Phys. \textbf{73} (2013), 187-199.
\bibitem{friedrich}Th. Friedrich, \emph{On the spinor representation of surfaces in Euclidean 3-space,} J. Geom. Phys. \textbf{28} (1998), 143-157.
\bibitem{gmm1} J.A. G\'alvez, A. Mart\'inez and F. Mil\'an, \emph{Flat surfaces in the hyperbolic $3-$space}, Math. Ann. \textbf{316} (2000), 419-435.
\bibitem{gmm2} J.A. G\'alvez, A. Mart\'inez and F. Mil\'an, \emph{Flat surfaces in $\mathbb{L}^4$}, Ann. Glob. Anal. Geom. \textbf{20:3} (2001), 243-251.
\bibitem{kenmotsu} K. Kenmotsu, \emph{Weierstrass formula for surfaces of prescribed mean curvature,} Math. Ann. \textbf{245} (1979), 89-99.
\bibitem{konderak}J. Konderak, \emph{A Weierstrass representation theorem for Lorentz surfaces,} Complex variables, Theory and Application: An International Journal. \textbf{50:5} (2005), 319-332.
\bibitem{konopelchenko} B.G. Konopelchenko and G. Landolfi, \emph{Generalized Weierstrass representation for surfaces in multidimensional Riemann spaces,} J. Geom. Phys. \textbf{29:4} (1999), 319-333.
\bibitem{konopelchenko2} B.G. Konopelchenko, \emph{Weierstrass representations for surfaces in 4D spaces and their integrable deformations via DS hierarchy,} Ann. Glob. Anal. Geom. \textbf{18:1} (2000), 61-74.
\bibitem{lawn_roth}M.A. Lawn and J. Roth, \emph{Spinorial characterization of surfaces in pseudo-Riemannian space forms,} Math. Phys. Anal. Geom. \textbf{14:3} (2011), 185-195.
\bibitem{lawson2}H.B. Lawson, \emph{The global behaviour of minimal surfaces in $S^n,$} Ann. Math. \textbf{92:2} (1970), 224-237.

\bibitem{lee} S. Lee, \emph{Weierstrass representation for timelike minimal surfaces in Minkowski $3-$space,} arXiv:math/0608726 [math.DG]. 
\bibitem{morel}B. Morel, \emph{Surfaces in $\mathbb{S}^3$ and $\mathbb{H}^3$ via spinors,} S\'eminaire de Th\'eorie spectrale et g\'eom\'etrie (Grenoble), \textbf{23} (2005), 131-144.
\bibitem{VP} V. Patty, \emph{Representaci\'on espinorial de superficies Lorentzianas en $\R^{2,2},$} Tesis de Doctorado, Posgrado Conjunto UNAM-UMSNH, in preparation.
\bibitem{oneill} B. O'Neill, \emph{Semi-Riemannian Geometry with applications to relativity,} Pure and applied mathematics, 1983.
\bibitem{spivak} M. Spivak, \emph{A comprehensive introduction to differential geometry IV,} third edition, Publish or Perish, INC, 1999. 
\bibitem{Weinstein} T. Weinstein, \emph{An introduction to Lorentz surfaces,} de Gruyter expositions in mathematics 22, Walter de Gruyter, 1996.
\end{thebibliography}
\end{document}